\title{Equation-free patch scheme for efficient computational homogenisation via self-adjoint coupling}
\author{J.~E. Bunder
\thanks{School of Mathematical Sciences, University of Adelaide, South Australia~5005, Australia.  
\protect\url{http://orcid.org/0000-0001-5355-2288} ,
\protect\url{mailto:judith.bunder@adelaide.edu.au} } 
\and 
I.~G. Kevrekidis
\thanks{Departments of Chemical and Biomolecular Engineering and Applied Mathematics and Statistics, Johns Hopkins University, Baltimore, Maryland, USA.
\protect\url{http://orcid.org/0000-0002-5313-5887} }
\and 
A.~J. Roberts
\thanks{School of Mathematical Sciences, University of Adelaide, South Australia.  \protect\url{mailto:anthony.roberts@adelaide.edu.au} ,
\protect\url{http://orcid.org/0000-0001-8930-1552} }
}  
\date{\today}
\let\LTXappendix\appendix
\def\appendix{{\raggedright\printbibliography}%
    \gdef\printbibliography{}%
    \clearpage\LTXappendix}
\pgfplotsset{compat=newest} 
\renewcommand{\mod}{\operatorname{mod}}
\def\imath{i}\def\jmath{j}%
\def\diag{\operatorname{diag}}
\def\tv{\RaisedName{\detokenize{\tv}}%
    \tilde v}
\def\tV{\RaisedName{\detokenize{\tV}}%
    \tilde V}
\def\i{\RaisedName{\detokenize{\i}}%
    \operatorname{i}}
\def\Kp{\RaisedName{\detokenize{\Kp}}\hat K}
\def\Km{\RaisedName{\detokenize{\Km}}\check K}
\def\Gamma{{\RaisedName{\detokenize{\Gamma}}P}}
\newcounter{j}\newcounter{i}\newcounter{I}
\def\asinh{\operatorname{asinh}}
\def\C{\mathchar"213B}
\def\ltor{\accentset{\hookrightarrow}}
\def\rtol{\accentset{\hookleftarrow}}
\definecolor{mycolor1}{rgb}{0.00000,0.44700,0.74100}%
\definecolor{mycolor2}{rgb}{0.85000,0.32500,0.09800}%
\definecolor{mycolor3}{rgb}{0.92900,0.69400,0.12500}%
\definecolor{mycolor4}{rgb}{0.49400,0.18400,0.55600}%
\definecolor{mycolor5}{rgb}{0.46600,0.67400,0.18800}%
\definecolor{mycolor6}{rgb}{0.30100,0.74500,0.93300}%
\definecolor{mycolor7}{rgb}{0.63500, 0.07800, 0.18400}%
\newcommand{\vi}[1][i]{\vv^I_{#1}}
\def\RaisedName#1{} %
\begin{document}

\maketitle

\begin{abstract}
Equation-free macroscale modelling is a systematic and rigorous computational methodology for efficiently predicting the dynamics of a microscale system at a desired macroscale system level.
In this scheme, the given microscale model is computed in small patches spread across the space-time domain, with patch coupling conditions bridging the unsimulated space.
For accurate simulations, care must be taken in designing the patch coupling conditions.
Here we construct novel coupling conditions which preserve translational invariance, rotational invariance, and self-adjoint symmetry, thus guaranteeing that conservation laws associated with these symmetries are preserved in the macroscale simulation.
Spectral and algebraic analyses of the proposed scheme in both one and two dimensions reveal mechanisms for further improving the accuracy of the simulations.
Consistency of the patch scheme's macroscale dynamics with the original microscale model is proved. 
This new self-adjoint patch scheme provides an efficient, flexible, and accurate computational homogenisation in a wide range of multiscale scenarios of interest to scientists and engineers.
\end{abstract}

\tableofcontents

\section{Introduction}
\label{sec:intro}

In many complex systems the macroscale dynamics are determined from the coherent behaviour of microscopic agents, such as electrons, molecules, or individuals in a population.
Some complex systems have macroscale models which adequately describe the large-scale dynamics (e.g., diffusion through a homogenous material), but for many others there are no known algebraic closures for a macroscale model and so any accurate description of the system is reliant on resolving microscale structures and interactions which are on a significantly smaller scale than the macroscale of interest.
Furthermore, often a microscale model provides the most accurate description of a system, but its full evaluation is prohibitively expensive for large-scale computations.
Although an approximate macroscale model may be derivable via various multiscale methods, such derivations are often reliant on restrictive assumptions or ad hoc methods which may not be suitable for all systems.
In a recent review for \textsc{nasa}, \citet{NASA2018} discussed the accuracy and adaptability of available multiscale methods and concluded that there is a
``Lack of useful automatic methods for linking models and passing information between scales''.
Our equation-free computational schemes fill this lack by using a given microscale model directly, with no simplification or transformation, and invoking generically crafted coupling conditions to ensure macroscale accuracy.

Equation-free macroscale modelling avoids the derivation of a macroscale model and seeks to overcome computational limitations by computing the given microscale model only within a small fraction of the space-time domain \citep{Kevrekidis04a, Samaey03b, Samaey04}.
As an example, \cref{fig:homoEG2} shows an equation-free simulation of two dimensional diffusion, with microscale heterogeneity in the diffusivities. 
In this simulation the microscale details of the system are not computed in the space between the patches. 
Nonetheless the patch scheme effectively predicts the macroscale dynamics---it is a form of computational homogenisation via a sparse simulation.
In a `Roadmap' prepared for the US Dept of Energy by \cite{Dolbow04}, the scheme proposed here is a Multiresolution, Hybrid, Closure Method.

To suitably classify our equation-free scheme we distinguish the following two terms:
\begin{itemize}
\item \emph{numerical homogenisation} means some numerical computations and analysis of the microscale system that somehow forms a function that serves as a closure for chosen macroscale variables  \citep{Engquist08, Saeb2016};
\item \emph{computational homogenisation} means an on-the-fly purely computational scheme, applied to the multiscale system, that in itself provides an effective closure which reasonably predicts the macroscale dynamics with a computational cost that is essentially independent of the scale separation between micro- and macro-scales~\citep{Kevrekidis09a,Gear03}.
\end{itemize}
The patch scheme developed herein is an example of computational homogenisation.

\begin{figure}
\centering
\caption{\label{fig:homoEG2}four times of a simulation of heterogeneous diffusion~\eqref{eq:full2Ddiff} on small patches in 2D \(xy\)-space coupled by spectral interpolation (\cref{sec:sasc}).  
Here the \(25\)~patches are relatively large for visibility (size ratio \(r=0.4\)).
The heterogeneous, log-normal, diffusivities cause non-trivial sub-patch structures to emerge that are computationally homogenised by our patch scheme.}
\pgfplotsset{label shift=-1.5ex}
\begin{tabular}{@{\hspace{-2mm}}c@{}c@{}}
\includegraphics{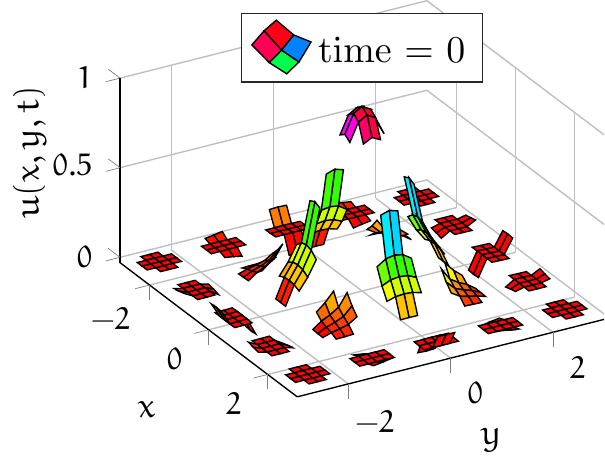}&
\includegraphics{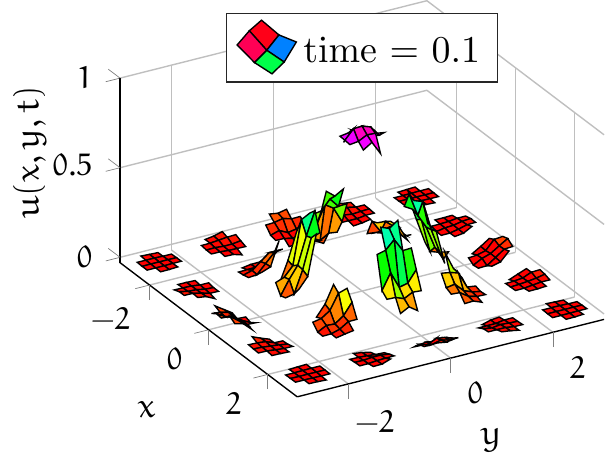}\\
\includegraphics{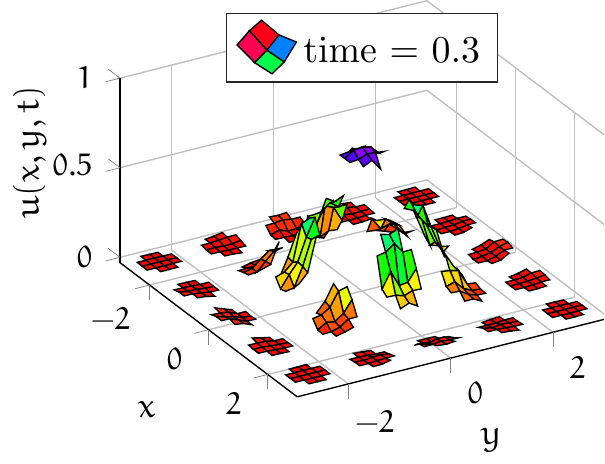}&
\includegraphics{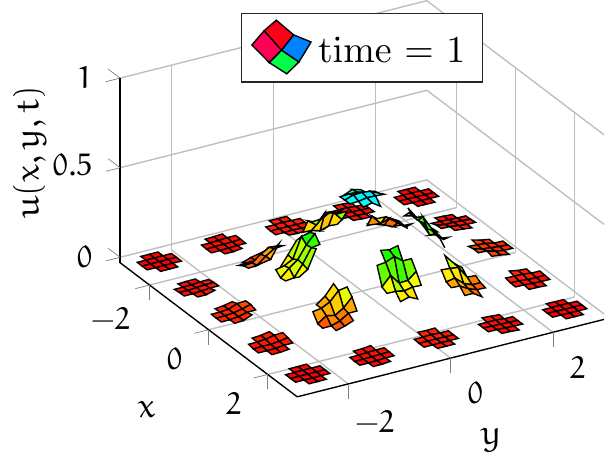}
\end{tabular}
\end{figure}

Much of the initial development of equation-free modelling was concerned with simulations which reduce the size of the temporal domain  and maintain the original spatial domain \citep{Kevrekidis04a, Kevrekidis09a, Samaey03b, Xiu05, Samaey04}.
In this case the numerical solution is constructed from a large number of microscale simulations over short `bursts' of time, with these bursts separated by some macroscale time step (significantly larger than the burst length) and a projective integrator providing the link between successive bursts.
In contrast, \cref{fig:homoEG2} shows a numerical solution for which the spatial domain is reduced to a number of non-overlapping patches, but without projective integration in time implemented. 
In this case, the dynamics which extends across the full spatial domain is captured by patch coupling conditions which interpolate between patches across the unsimulated space.
A full implementation of equation-free modelling combines both projective integration and spatially coupled patches; however, here we focus just on patches in space (often referred to as the gap-tooth method~\citep{Gear03}) because we are concerned with preserving spatial symmetries of the original microscale model.
In particular, we preserve translational invariance, rotational invariance and self-adjoint symmetry by deriving new patch coupling conditions.
\citet{Roberts2010} also constructed self-adjoint coupling conditions, but for overlapping patches, and only in the case of limited coupling.
\cref{sec:pd1D,sec:pd2D} describe our new self-adjoint patch coupling scheme for 1D and~2D space, respectively.

Patch coupling conditions are a crucial component in accurate patch simulations, and for systems with microscale heterogeneity particular care is required. 
\citet{Bunder2017} studied the patch scheme for 1D heterogeneous diffusion and showed accurate coupling conditions should take account of the underlying microscale structure with the interpolation between points strictly controlled by the period of the heterogeneous diffusion.
Although the coupling conditions constructed by \citet{Bunder2017} were shown to be effective for 1D heterogeneous diffusion, as were similar coupling conditions for many other systems~\citep{Roberts13, Cao2015, Jarrad2018}, these coupling conditions fail to preserve self-adjoint symmetry and thus do not maintain a fundamental symmetry of the original problem.
A system represented by $\partial_t\uv=\cL\uv$ with field~$\uv$ and linear operator~\cL\ is defined to be \emph{self-adjoint} if for all fields~$\vv,\uv$, \cL~satisfies $\langle \uv,\cL\vv\rangle=\langle \cL\uv,\vv\rangle$ for some inner product.
Here, we consider lattice systems with square matrix operator~\cL\ and apply the usual complex inner product $\langle \uv,\vv\rangle=\uv^{\dag}\vv$\,, where $\dag$~denotes the complex conjugate transpose.
To ensure that operator~\cL\ is self-adjoint it must satisfy $\cL^{\dag}=\cL$, termed \emph{Hermitian}.\footnote{Often~\cL\ is a real matrix, and then this Hermitian property is the usual matrix symmetry.}
An approximation scheme which does not maintain self-adjoint symmetry may result in a solution which does not satisfy essential requirements, such as conservation of energy. 
In particular, generalising the 1D patch scheme for heterogeneous diffusion developed by \citet{Bunder2017} to~2D heterogeneous diffusion produces undesirable fluctuations in the simulation.
These fluctuations arise because such a 2D~patch system produces a non-self-adjoint Jacobian that possessed complex eigenvalues.
Here we construct new patch coupling conditions which maintain the self-adjoint symmetry of the original microscale system, for both 1D and~2D space.
By preserving self-adjoint symmetry, these new coupling conditions have \text{much wider applicability.}

The code developed herein now forms part of a flexible \textsc{Matlab}\slash Octave Toolbox \cite[]{Maclean2020a} for equation-free computations that any researcher can download and use for a variety of problems \cite[]{Roberts2019b}.   
This Toolbox provides equation-free code suitable for many systems, such as diffusion and wave dynamics, and grants the user full flexibility in selecting one of the supplied projective integration and\slash or patch coupling schemes or importing user-written code.

To clarify the theoretical results and demonstrate the benefits of our novel self-adjoint patch scheme we use the examples of 1D and~2D microscale heterogeneous diffusion (\cref{sec:pd1D,sec:theory}, and \cref{sec:pd2D}, respectively).
For the 2D~case we pose that the system's evolving variables~\(u_{i,j}(t)\) are defined on a spatial lattice of points~\((x_i,y_j)\) with microscale spacing~\(d\) (\(\Delta x_i=\Delta y_j=d\)) and indexed by integers~\(i,j\). 
The governing large set of \ode{}s of the heterogeneous diffusion is then   
\begin{align}
d^2\partial_t u_{i,j}&=\kappa_{i+\frac12,j}(u_{i+1,j}-u_{i,j})+\kappa_{i-\frac12,j}(u_{i-1,j}-u_{i,j})\nonumber\\
&\quad{}+\kappa_{i,j+\frac12}(u_{i,j+1}-u_{i,j})+\kappa_{i,j-\frac12}(u_{i,j-1}-u_{i,j})
\label{eq:full2Ddiff}
\end{align}
for heterogeneous diffusivities~$\kappa_{i,j}$ which vary periodically over the given domain.
\cref{fig:homoEG2} illustrates such a lattice, albeit restricted to patches in space rather than space-time.
Here, for clarity of notation, and as shown in the figure, we assume a square domain and a square microscale lattice, but \cref{sec:pd2D} generalises to a rectangular domain and rectangular microscale lattice. 
We use the relatively simple example of heterogeneous diffusion as it is a canonical example which describes many physical systems and naturally extends to more complex systems, such as wave propagation and advection-diffusion, as briefly discussed in \cref{sec:fwsc}.

There are many multiscale methods which would provide good solutions to multiscale heterogeneous systems such as~\eqref{eq:full2Ddiff}, as reviewed by both \cite{Engquist08, Saeb2016}. 
For example, \cite{Abdulle2011, Engquist2011} applied the heterogeneous multi-scale method (\textsc{hmm}) to wave propagation in heterogeneous media, although it is predicated on an infinitely large scale separation between the `slow' variables which persist at the macroscale and the `fast' variables which are only observable at the microscale. 
\cite{Maier2019, Peterseim2019} considered a similar wave propagation model, but avoided the need for infinite  scale separations by applying localized orthogonal decomposition with numerical homogenization.
\cite{Romanazzi2016} also applied \textsc{hmm} to compute a macroscale model for a heterogeneous system of closely packed insulated conductors. 
\cite{Owhadi2015} investigated heterogeneous diffusion with numerical homogenization, but reformulated it as a Bayesian inference problem, thus providing a new methodology for deriving basis elements of the microscale structure. 
\cite{Cornaggia2020} considered  one-dimensional waves in periodic media and derived homogenized boundary and transmission conditions in the usual separation of scales infinite limit.
Homogenization and \textsc{hmm} multiscale models rely on being able to identify `fast' and `slow' variables in the microscale model and some prior knowledge of the macroscale model, and generally these models require substantial analytic work prior to numerical implementation~\citep{Carr2016} while also relying on an infinite separation of scales.
In contrast, equation-free modelling makes no assumptions concerning fast and slow variables, needs no limit on the separation of scales, and requires no knowledge of the macroscale model, but instead computes a numerical macroscale solution on-the-fly.
Consequently, equation-free modelling provides macroscale system-level predictions for complex dynamical systems which cannot be solved via other schemes.

\section{Self-adjoint preserving patch scheme for 1D}
\label{sec:pd1D}

The general scenario is when scientists or engineers has a well-specified microscale system on a characteristic microscale length~$d$, but are only interested in the behaviour of this system at some significantly larger macroscale $H\gg d$\,.
An important class of examples is the prediction of the macroscale, homogenised, dynamics of the microscale heterogeneous diffusion of a field~\(u(x,t)\) satisfying the \pde\ \(\partial_tu=\partial_x[\kappa(x)\partial_xu]\) where the diffusivity varies rapidly on the length-scale~\(d\) \citep[e.g.]{Engquist08, Saeb2016, Bunder2017}.
A distinguishing feature of our patch dynamics approach is that we do \emph{not} require infinite scale separation ``\(d\to0\)''; instead the methodology applies at finite~\(d\) via supporting theory which is directly applicable to finite~\(d\)~\cite[c.f.,][who apply scale separation $\epsilon \rightarrow 0$]{Engquist08}.

For simplicity, we introduce our approach for systems defined on a microscale spatial lattice, such as a spatial discretisations of \pde{}s on the microscale length~\(d\).
This section introduces how to construct patches for a 1D system so that a computational simulation on these patches efficiently and accurately predicts the macroscale of interest without needing to derive a macroscale closure. 
In this patch construction we focus on the specific example of 1D~heterogeneous diffusion on a microscale lattice, but the new patch scheme is similar for a wide range of 1D~systems, including nonlinear systems \cite[e.g., via the toolbox by][]{Maclean2020a}.

\begin{figure}
\centering
\caption{three 1D spatial patches indexed by $I$~and~$I\pm 1$\,. 
Filled circles indicate patch interior points with microscale spacing~$d$, and unfilled circles represent patch edge points. 
The gaps between patches are unsimulated space.
These patches have $n=5$ interior points indexed by $i=1:n$\,, and the two edges have indices $i=0$ and $i=n+1=6$\,.
The macroscale spacing between patches is~$H$, and the width of each patch is denoted $h:=nd$, as shown. }
\label{fig:1Dpatch}
\includegraphics{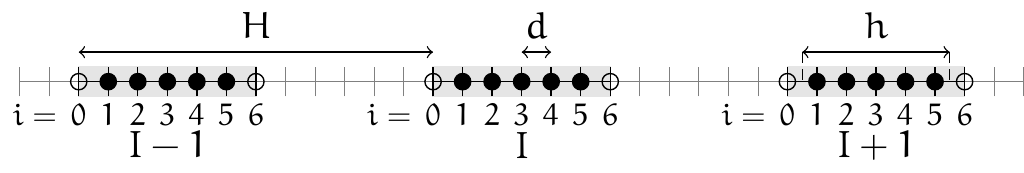}
\end{figure}

In~1D, the lattice has points~\(x_i\) of microscale spacing~\(d\) and we seek to predict the dynamics of the variables~\(u_i(t)\).
Heterogeneous diffusion on this 1D microscale lattice is the restriction of~\eqref{eq:full2Ddiff}, namely
\begin{equation}
d^2\partial_t u_{i}=\kappa_{i+\frac12}(u_{i+1}-u_{i})+\kappa_{i-\frac12}(u_{i-1}-u_{i}).
\label{eq:full1Ddiff}
\end{equation}
As illustrated in \cref{fig:1Dpatch}, to implement the 1D patch scheme we construct \(N\)~small patches across the spatial domain, separated by macroscale length~$H$ and indexed by $I=1,2,
\ldots, N$\,.
Generally we use uppercase letters to denote macroscale quantities, and lowercase letter to denote microscale quantities.
Each patch has $n$~\emph{interior} microscale points indexed by $i=1: n$ (herein, \(k:\ell\) denotes \(k,k+1,k+2,\ldots,\ell\)) and \emph{two edge} points indexed by $i=0$ on the left and $i=n+1$ on the right: in the \(I\)th~patch the location of these points is denoted by~\(x^I_i\).  
We call~\(n\) the \emph{size} of the patches, as opposed to the physical patch \emph{width}~$h$.
We now relabel the microscale field values, using~$u^I_{i}$ to denote the value of~\(u\) in patch~$I$ at microscale patch interior index~$i$.
Similarly, $\kappa_{i+1/2}^I$ is the diffusivity between the $i$~and~$i+1$ points in the $I$th~patch.  
The patch scheme uses the given microscale system~\eqref{eq:full1Ddiff} inside each patch (as a given `black-box'), here the sub-patch \ode{}s are
\begin{equation}
d^2\partial_t u^I_i=\kappa^I_{i+\frac12}(u^I_{i+1}-u^I_{i})
+\kappa^I_{i-\frac12}(u^I_{i-1}-u^I_{i})\,,
\label{eq:1Ddiff}
\end{equation}
on the interior points $i=1:n$ for every patch~$I$, where `sub-patch' refers to spatial scales less than the patch width~$h$.
These patches are coupled together by setting the patch-edge values~$u^I_0$ and~\(u^I_{n+1}\) through interpolation of \(u\)-values from neighbouring patches (\cref{fig:1Dcoupling}).
The new scheme is to interpolate from the interior values closest to the patch edges, the next-to-edge values~$u_1^I$ and~\(u^I_n\), to determine the edge values~\(u^I_{n+1}\) and~\(u^I_0\), respectively.
As illustrated in \cref{fig:1Dcoupling}, the interpolation is from the \emph{opposite} patch edge so that  \(u\)-values at $i=1$ in nearby patches~$I$ are interpolated to the edge \(u\)-value at $i=n+1$, and \(u\)-values at $i=n$ in nearby patches~$I$ are interpolated to the edge \(u\)-value at $i=0$\,.

This article establishes that this new patch scheme both preserves self-adjoint symmetry and also accurately captures the macroscale behaviour of the full heterogeneous system~\eqref{eq:full1Ddiff}.

This self-adjoint coupling is analogous to that applied by \cite{Gear03} in a gap-tooth method particle simulation.
In their particle simulation, at the right edge of patch~$I$, some fraction~$\alpha\leq 1$ of the right-moving particles (i.e., those particles leaving patch~$I$ via the right edge) enter the left edge of adjacent patch~$I+1$, and the remaining fraction~$1-\alpha$ enter the left edge of patch~$I$.
Similarly, left-moving particles at the left edge of patch~$I$ (i.e., those particles leaving patch~$I$ via the left edge) are either distributed to the right edge of patch~$I-1$ or to the right edge of patch~$I$. 
\cite{Gear03} justified their coupling using a flux analogy.

The patch width~$h:=nd$ is measured across the interior points from/to midway between the extreme pairs of microscale points (\cref{fig:1Dpatch}).
The crucial ratio $r:=h/H$ is, in~1D, the fraction of the given spatial domain on which microscale computation takes place.
For efficient simulations we typically choose ratio $r\ll 1$\,, equivalently \(h\ll H\)\,. 
Herein, examples illustrating system-level predictions often use $r=0.1$ so that there is a significant proportion of uncomputed space---the patch scheme is a sparse simulation; however, in example simulations (e.g., \cref{fig:homoEG2,fig:homoExample1Uxt}), for better visualisation we often choose larger~$r$.
When the ratio $r=1$ the patch scheme computes the given microscale system, here~\eqref{eq:full1Ddiff}, since the left edge $i=0$ of the $I$th~patch coincides with the $i=n$ next-to-edge point of the $(I-1)$th~patch, and the right edge $i=n+1$ of the $I$th~patch coincides with the $i=1$ next-to-edge point of the $(I+1)$th~patch, and thus the patches cover all microscale points in the domain.
\cref{sec:sasc,sec:fwsc,sec:ens} demonstrate here, and \cref{sec:theory} proves in general, that patches with small fraction~\(r\) accurately predict \text{the macroscale dynamics.}

\begin{figure}
\centering
\caption{\label{fig:1Dcoupling}
patch self-adjoint preserving coupling.
Filled {\color{blue} triangles}\slash {\color{red} circles} at indices $i={\color{blue} n},{\color{red} 1}$ represent next-to-edge points whose \(u\)-values are interpolated to give \(u\)-values at the edge points, $i={\color{blue} 0},{\color{red} n+1}$, represented by unfilled {\color{blue} triangles}\slash {\color{red} circles}.
Interpolation to the {\color{blue}left}\slash {\color{red}right} edge of a patch is from the next-to-edge points on the {\color{blue}right}\slash {\color{red}left} of nearby patches, as indicated by {\color{blue} blue}\slash {\color{red}red} arrows.}
\includegraphics{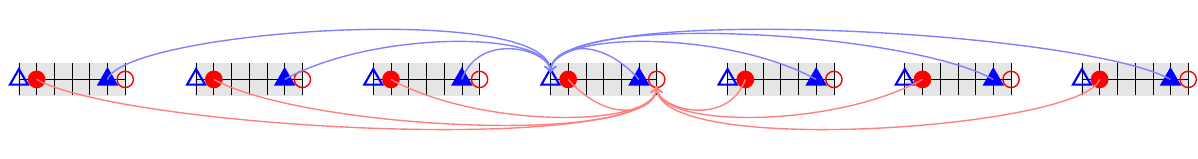}
\end{figure}

\subsection{Self-adjoint patch coupling for 1D}
\label{sec:sa1D}

A key requirement in accurate patch simulations is carefully chosen patch coupling. 
Most previous patch implementations~\citep[e.g.,][]{Roberts13, Cao2015, Bunder2017} interpolated the \(u\)-values from the centre of each patch 
to determine the \(u\)-values on the patch edges.
This was proven to be accurate for smooth dynamics in both 1D and~2D, as well as for 1D heterogeneous diffusion \citep{Bunder2017}.
However, this centre interpolated coupling scheme has one weakness: it does not preserve the self-adjoint nature, the symmetry, of many physical systems, such as the diffusion~\eqref{eq:full1Ddiff}.
This section establishes, via \cref{lemsasp,lemsafw}, that our new patch coupling (\cref{fig:1Dcoupling}) preserves the self-adjoint symmetry of the \text{original microscale system.}

The dynamic variables in the patch scheme are the \(u\)-values at the interior points of the patches.
Hence, for every patch~\(I\), denote the vector of interior values by \(\uv^I:=(u^I_1,\ldots,u^I_n)\in\RR^n\).
These vectors do not contain the patch edge values~$u^I_0$ and~$u^I_{n+1}$ because edge values are determined by interpolation from nearby patches.
Denote the global \(u\)-values by the vector \(\uv:=(\uv^1,\ldots,\uv^N)\in\RR^{nN}\).
Then the microscale linear diffusion~\eqref{eq:1Ddiff} on the coupled patches has the form of the linear systems of \ode{}s
\begin{equation}
\partial_t\uv =\cL \uv \,,\label{eq:diffnew}
\end{equation}
for real-valued $nN\times nN$  matrix~\cL. 
From the patch structure, the square matrix~\cL\ has the form of \(N\times N\) blocks, each of microscale size~\(n\times n\).
Let $\cL^{IJ}$ be  the \(n\times n\) block of the influence on patch~\(I\) from patch~\(J\). 
Here, for heterogeneous systems such as~\eqref{eq:1Ddiff}, we establish that the inter-patch coupling shown by \cref{fig:1Dcoupling} ensures that \cL~is self-adjoint, which in turn ensures accurate macroscale predictions. 
As discussed in \cref{sec:intro}, \cL~is self-adjoint when it is equal to its Hermitian conjugate, $\cL=\cL^{\dag}$\,, and so when \cL~is real we only require it to be symmetric. 

One crucial proviso for these results is that the patch width~\(h=nd\) must be an integral multiple of the period of the microscale heterogeneity. 
The desirability of such a limitation on the patch width has been observed before \cite[\S5.2]{Bunder2013b}
\cite[p.3, Ref.~5,19,20, e.g.]{Abdulle2020b} 
\cite[p.62]{Abdulle2012}.
However, \cref{sec:ens} shows that, and \cref{sssech} proves that, embedding the given heterogeneous system into an ensemble overcomes this limitation on \text{the patch width.}

We decompose the matrix in~\eqref{eq:diffnew} into two parts, $\cL=\cD+\cC$, for block diagonal dynamics matrix $\cD=[\cD^{I}]$ and coupling matrix $\cC=[\cC^{IJ}]$ with $n\times n$~blocks $\cD^{I}$~and~$\cC^{IJ}$.
For~$\cL$ to be self-adjoint requires that both \cD~and~\cC\ are self-adjoint, that is, $(\cD^{I})^{\dag}=\cD^{I}$ and $(\cC^{IJ})^{\dag}=\cC^{JI}$.
The blocks of the dynamics matrix~$\cD^I$ encode the given sub-patch \ode{}s~\eqref{eq:1Ddiff} within the $I$th~patch, and without any patch coupling.
Thus \cD~is self-adjoint if the given microscale system is self-adjoint.
For example, for 1D microscale diffusion \ode{}s~\cref{eq:1Ddiff} the elements of the dynamics matrix are, for every patch~\(I\),
\begin{align}
\cD^I_{i,i}&=-\kappa^I_{i+\frac12}-\kappa^I_{i-\frac12}\quad \text{for } i=1:n\,,\nonumber\\
\cD^I_{i,i+1}&=\cD^I_{i+1,i}=\kappa^I_{i+\frac12}\quad \text{for } i=1:n-1\,,\label{eq:D}
\end{align}
and therefore \cD\ is symmetric and real, and thus self-adjoint.

We now construct a self-adjoint coupling matrix~\cC\ specifically for 1D heterogeneous diffusion~\cref{eq:1Ddiff}, but readily adaptable to any 1D system of second order in space.
We make the following two assumptions that reflect that we want the sub-patch system to be almost a `black-box'. 
\begin{enumerate}[label=\textit{A\arabic*},ref=Assumption~A\arabic*]
\item 
\label{A1} The microscale \ode{}s~\eqref{eq:1Ddiff} are unmodified for all $i$ and~\(I\).
\item 
\label{A2} The patch edge values~$u^I_0$ and~$u^I_{n+1}$ are determined by an interpolation that is independent of the diffusion coefficients. 
\end{enumerate}

\cref{A1} requires that rows \(i=2:n-1\) of~\(\cC^{IJ}\)\ must be zero because~\(\cD^I\)\ already encodes the \ode{}s~\eqref{eq:1Ddiff} for $i=2:n-1$\,.
Consequently, since symmetry requires $\cC^{IJ}=(\cC^{JI})^{\dag}$\,, the columns \(j=2:n-1\) must be zero.
Thus the only nonzero elements in $\cC^{IJ}$ are the four corner elements~$\cC^{IJ}_{ij}$ for $i,j=1,n$\,.
Given the form of~\cD\ in~\eqref{eq:D}, and the form of the \ode{}s~\eqref{eq:1Ddiff}, to satisfy \cref{A1} for the two cases $i=1,n$ we must have
\begin{equation}
\kappa^I_{\frac12}u^I_0=\sum_J\big(\cC^{IJ}_{11}u^J_1+\cC^{IJ}_{1n}u^J_n\big)
\quad \text{and} \quad 
\kappa^I_{n+\frac12}u^I_{n+1}=\sum_J\big(\cC^{IJ}_{n1}u^J_1+\cC^{IJ}_{nn}u^I_n\big).\label{eq:interp}
\end{equation}
These two equations couple patches across the unsimulated space between patches by setting the two patch edge values $u^I_0$~and~$u^I_{n+1}$ as interpolations of the sub-patch fields $u^J_1$~and~$u^J_n$.

To satisfy \cref{A2} we introduce an $nN\times nN$~\emph{interpolation} matrix~$\cI$ that has no dependence on the diffusion coefficients, with the same block structure as~\cC\ where $\cI^{IJ}_{ij}:=0$ for $i\text{ or }j\neq 1,n$\,, whereas   for $j=1,n$ the entries satisfy  $\cC^{IJ}_{1j}=\kappa^I_{1/2} \cI^{IJ}_{1j}$ and $\cC^{IJ}_{nj}=\kappa^I_{n+1/2} \cI^{IJ}_{nj}$\,.
Consequently, \eqref{eq:interp}~simplifies so that the interpolation to the patch-edge values is independent of the diffusion coefficients:
\begin{equation*}
u^I_0=\sum_J\big(\cI^{IJ}_{11}u^J_{1}+\cI^{IJ}_{1n}u^J_n\big) 
\quad \text{and} \quad
u^I_{n+1}=\sum_J\big(\cI^{IJ}_{n1}u^J_1+\cI^{IJ}_{nn}u^J_n\big).
\end{equation*}
For the interpolation to generically interpolate real-values to real-values we thus require matrix~\(\cI\), and hence~$\cC$, to be real.
But we have not yet completely ensured $\cC$~is self-adjoint.

For $\cC$~to be self-adjoint we require that $\cC^{IJ}_{1n}=\kappa^I_{1/2}\cI^{IJ}_{n1}=\cC^{JI}_{n1}=\kappa^J_{n+1/2}\cI^{JI}_{1n}$\,, $\cC^{IJ}_{11}=\kappa^I_{1/2} \cI^{IJ}_{11}=\cC^{JI}_{11}=\kappa^J_{1/2} \cI^{JI}_{11}$ and $\cC^{IJ}_{nn}=\kappa^I_{n+1/2} \cI^{IJ}_{nn}=\cC^{JI}_{nn}=\kappa^J_{n+1/2} \cI^{JI}_{nn}$\,, and since the interpolation coefficients are independent of the diffusion they must satisfy $\cI^{IJ}_{1n}={\cI}^{JI}_{n1}$\,, $\cI^{IJ}_{11}={\cI}^{JI}_{11}$ and $\cI^{IJ}_{nn}={\cI}^{JI}_{nn}$\,.
But if the interpolation matrix `top-left' and `bottom-right' elements~$\cI^{IJ}_{11}$ and~$\cI^{IJ}_{nn}$ are nonzero, then they cannot produce an accurate interpolation. 
To see why, say $J\geq I$ so that~$u^J_1$ is a distance of~$H(J-I)+d$ from~$u^I_0$, and $u^I_1$~is a distance of~$H(J-I)-d$ from~$u^J_0$---these different distances imply that if coefficients $\cI^{IJ}_{11}$~and~$\cI^{JI}_{11}$ are nonzero, then they should not have the same magnitude.
A similar argument implies that nonzero $\cI^{IJ}_{nn}$~and~$\cI^{JI}_{nn}$  should not have the same magnitude.
Thus we set $\cI^{IJ}_{11}=\cI^{IJ}_{nn}=0$\,.
Consequently, the only coupling entries which may be nonzero are $\kappa^I_{1/2}\cI^{IJ}_{n1}=\kappa^J_{n+1/2}\cI^{JI}_{1n}$\,, and for $\cI^{IJ}_{n1}=\cI^{JI}_{1n}$ our diffusion is then constrained by $\kappa^I_{1/2}=\kappa^J_{n+1/2}$ for all patches $I,J$.
The resulting interpolation gives edge values 
\begin{equation}
u^I_0=\sum_J\cI^{IJ}_{1n}u^J_n
\quad \text{and} \quad 
u^I_{n+1}=\sum_J\cI^{IJ}_{n1}u^J_1\,.\label{eq:sacoup}
\end{equation}
\cref{fig:1Dcoupling} draws this interpolation and shows that for symmetric interpolation matrix~\(\cI\) the interpolation is implemented with both translational and rotational symmetry  (i.e., interpolations are invariant upon reflection and swapping red-blue).

Thus \cref{A1,A2} not only tightly constrain the form of coupling matrix~$\cC$, they also constrain the patch size and placement by requiring that the diffusivities satisfy $\kappa^I_{1/2}=\kappa^J_{n+1/2}$ for every patch~$I,J$.
If the heterogeneity is periodic, period~\(p\), then this constrains the patch width~$h$ by requiring the number of patch interior indices~$n$ to be divisible by the heterogeneous period~$p$.
\cref{sec:ens} shows that this constraint on the patch width may be overcome by embedding the diffusion into an ensemble of phase-shifted diffusions, but as this embedding increases the size of the simulation by a factor of~$p$, it incurs extra computation and so might not be suitable for all applications.

The self-adjoint matrix operator~\cL\ is here developed in the context of 1D heterogeneous diffusion.
Nonetheless, the same coupling matrix~\cC\ could be used for any system where the dynamics matrix~\cD\ is self-adjoint and the given microscale model is no more than second order in space, and thus requiring only one coupling condition on each patch edge. 
Higher order microscale models, for example, microscale models fourth order in space, require two coupling conditions for each patch edge and therefore require a more complicated coupling matrix~\cC.

\cref{sec:sasc} discusses the case of global spectral coupling, whereas \cref{sec:fwsc} discusses coupling from a finite number of near neighbouring patches. 
Such patch couplings have different coupling matrices~$\cC$, but for both $\cL=\cD+\cC$ is self-adjoint, and both satisfy \cref{A1,A2}.

\subsection{Spectral coupling of patches}
\label{sec:sasc}

This coupling uses a spectral interpolation of selected patch interior values to give the values on patch edges.
Here we assume the macroscale solution is \(L\)-periodic in space, for domain length \(L:=NH\)\,.
This spectral coupling is very accurate, as indicated by consistency arguments in \cref{sec:theory} and by numerical tests in \cref{sec:sa2D} for the 2D case which also hold in 1D.

Let's start by determining the patches' right-edge values~$u^I_{n+1}$.
The first step is to compute the Fourier transform, the \(N\)~coefficients~$\ltor{u}_{k}$, of the left-next-to-edge values~\(u^I_1\) (the over-harpoon to the right denotes using left-values to determine right-values).  
That is, recalling $x^I_1$~is the spatial location of~$u^I_1$, the Fourier transform computes the coefficients (\(\i:=\sqrt{-1}\))
\begin{subequations}\label{eqs:1Dspec}%
\begin{align}&
\ltor{u}_{k}:=\frac1N\sum_{I}u^I_1e^{-\i kx^I_1},
&\text{so that }&
u^I_1=\sum_{k}\ltor{u}_{k}e^{\i kx^I_1},\quad I=1:N\,.
\label{eq:1Dspec1}
\end{align} 
The wavenumbers~$k$ in these Fourier transforms are an appropriate fixed set of integer multiples of~$2\pi/L$ for domain length~\(L\).
The second step computes the right-edge values by evaluating this Fourier transform at the right-edge of each patch, namely at~\(x^I_{n+1}=x^I_1+h\) for the displacement of one patch width $h=nd$\,. 
Hence
\begin{equation}
u^I_{n+1}  
:=\sum_{k}\ltor{u}_ke^{\i kx^I_{n+1}}
=\sum_{k}\ltor{u}_ke^{\i k(x^I_1+h)} 
=\sum_{k}\big(\ltor{u}_ke^{\i kh}\big)e^{\i kx^I_1},
\label{eq:1Dspecn1}
\end{equation}
which is efficiently realised by computing the inverse Fourier transform of the values~\(\ltor{u}_ke^{\i kh}\) for the range of wavenumbers~\(k\).

Similarly, to compute left-edge values~$u^I_0$ a Fourier transform computes the coefficients~\(\rtol{u}_{k}\) of the right-next-to-edge values~$u^I_n$ using the same set of wavenumbers, shifts a displacement~\(-h\) by multiplying by~\(e^{-\i kh}\), and then an inverse Fourier transform interpolates to the patch left-edge: from the Fourier transform
\begin{align}
u^I_n&=\sum_{k}\rtol{u}_{k}e^{\i kx^I_n}, 
&\text{then }
u^I_0&:=\sum_{k}\big(\rtol{u}_ke^{-\i kh}\big)e^{\i kx^I_n}.\label{eq:1Dspec0}
\end{align}
\end{subequations}

\begin{lemma}\label{lemsasp}
Coupling patches with the spectral interpolation~\eqref{eqs:1Dspec} preserves the self-adjoint symmetry of heterogeneous systems in the form~\eqref{eq:1Ddiff}.
\end{lemma}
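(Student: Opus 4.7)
The plan is to read off the entries of the interpolation matrix~$\cI$ directly from the spectral formulas~\eqref{eqs:1Dspec} and then apply the algebraic symmetry conditions that Section~\ref{sec:sa1D} has already identified as sufficient for $\cC$, and hence $\cL=\cD+\cC$, to be self-adjoint.

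First, substituting~\eqref{eq:1Dspec1} into~\eqref{eq:1Dspecn1} writes $u^I_{n+1}$ as a linear combination of the left-next-to-edge values $u^J_1$ with coefficients
\begin{equation*}
\cI^{IJ}_{n1}=\frac{1}{N}\sum_k e^{\i k(x^I_1-x^J_1+h)}\,;
\end{equation*}
an analogous substitution in~\eqref{eq:1Dspec0}, together with the elementary identity $x^I_n-x^J_n=x^I_1-x^J_1$ (each interior index is offset by the same multiple of~$d$), writes $u^I_0$ as a linear combination of the right-next-to-edge values $u^J_n$ with coefficients
\begin{equation*}
\cI^{IJ}_{1n}=\frac{1}{N}\sum_k e^{\i k(x^I_1-x^J_1-h)}\,.
\end{equation*}
Because the spectral rule couples $u^I_{n+1}$ only to the $u^J_1$ and $u^I_0$ only to the $u^J_n$, the corner entries $\cI^{IJ}_{11}$ and $\cI^{IJ}_{nn}$ are automatically zero---the first of the requirements isolated in Section~\ref{sec:sa1D}.

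Comparing the two displayed formulas shows $\cI^{JI}_{1n}=\frac{1}{N}\sum_k e^{-\i k(x^I_1-x^J_1+h)}$, which equals $\cI^{IJ}_{n1}$ provided the set of $N$ wavenumbers is closed under $k\mapsto-k$. This is the standard symmetric DFT choice on the periodic macro-domain of length $L=NH$, and it is also precisely the condition that forces the interpolation of real data to be real; hence $\cI$ is real as well. With the three properties $\cI^{IJ}_{11}=\cI^{IJ}_{nn}=0$, $\cI^{IJ}_{n1}=\cI^{JI}_{1n}$, and $\cI$~real in place, the construction of~$\cC$ in Section~\ref{sec:sa1D} (culminating in~\eqref{eq:sacoup}) delivers $\cC^{IJ}=(\cC^{JI})^{\dag}$ under the proviso $\kappa^I_{1/2}=\kappa^J_{n+1/2}$, and combining with the already-Hermitian block-diagonal dynamics matrix $\cD$ of~\eqref{eq:D} yields that $\cL=\cD+\cC$ is Hermitian.

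The main subtlety, and the one step I would expand in a careful write-up, is the wavenumber-symmetry argument: the usual convention takes $N$ wavenumbers symmetric about zero, which is intrinsically compatible with real interpolation, but when $N$ is even the Nyquist mode has no symmetric partner and requires the standard $\cos$-only (half-and-half) treatment for both the reality of $\cI$ and the identity $\cI^{IJ}_{n1}=\cI^{JI}_{1n}$ to hold verbatim. Everything else is a direct unpacking of definitions against the algebraic checklist already prepared in Section~\ref{sec:sa1D}.
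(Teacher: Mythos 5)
Your proposal is correct and follows essentially the same route as the paper: compute the interpolation coefficients $\cI^{IJ}_{n1}$ and $\cI^{IJ}_{1n}$ explicitly from the Fourier formulas, observe they are complex conjugates of one another under swapping $I\leftrightarrow J$, and invoke the symmetry of the wavenumber set under $k\mapsto-k$ to conclude $\cI^{IJ}_{n1}=\cI^{JI}_{1n}$ and hence symmetry of~$\cC$. Your remark about the Nyquist mode for even~$N$ is a sensible refinement of the paper's parenthetical ``as is usual'', but adds nothing structurally different.
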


Some algebra now establishes this lemma.
The effect of the spectral interpolation to the right-edge is, recalling the patch-width \(h=rH\)\,,
\begin{subequations}\label{eqs:ccsp}%
\begin{align}
u^I_{n+1}&
:=\sum_{k}\ltor{u}_ke^{\i kh}\,e^{\i kx^I_1}
=\sum_{k}\frac1N\sum_{J}u^J_1e^{-\i kx^J_1} e^{\i kh}\,e^{\i kx^I_1}
\nonumber\\&
=\sum_{J}u^J_1\sum_{k}\frac1Ne^{\i k(-x^J_1+h+x^I_1)}
=\sum_{J}u^J_1\sum_{k}\frac1Ne^{\i kH(I-J+r)}
\nonumber\\&
=\sum_{J}\cI^{IJ}_{n1}u^J_1
\qquad\text{for }\cI^{IJ}_{n1}:=\frac1N\sum_{k}e^{\i kH(I-J+r)}.
\end{align}
Similarly, the effect of the spectral interpolation to the left-edge is
\begin{align}
u^I_0&=\cdots
=\sum_{J}\cI^{IJ}_{1n} u^J_n
\qquad\text{for }\cI^{IJ}_{1n}:=\frac1N\sum_{k}e^{\i kH(I-J-r)}.
\end{align}
\end{subequations}
These expressions determine the coefficients in the coupling matrix~\cC.

Since the sums in~\cref{eqs:ccsp} all use the same set of wavenumbers~\(k\), then every pair of interpolation matrix elements \(\cI^{IJ}_{1n}\) and~\(\cI^{JI}_{n1}\) are the complex conjugate of each other.
In real problems, provided every wavenumber~\(k\) in these Fourier sums is partnered by the corresponding negative wavenumber~\(-k\) in the sum (as is usual), then the complex sums for \(\cI^{IJ}_{1n}\) and~\(\cI^{JI}_{n1}\) are all real-valued, and since they are complex conjugates they must be equal, \(\cI^{IJ}_{1n}=\cI^{JI}_{n1}\)\,.
Hence, \cC~is symmetric and so this spectral coupling preserves self-adjoint symmetry.

\begin{figure}
\centering
\caption{\label{fig:homoExample1Uxt}%
example simulation in 1D space of heterogeneous diffusion~\eqref{eq:1Ddiff} on nine patches with spectral coupling~\eqref{eqs:1Dspec} and rather large size ratio \(r=0.3\) (for visibility).  
The listed diffusion coefficients~\(\kappa_{i+1/2}\) have period five, and each patch has \(n=5\) interior points as plotted.
The first five computed eigenvalues of the scheme's operator~\cL\ are expectedly close to~\(-k^2\), and then have a large spectral gap to the sub-patch modes.
}
\raisebox{-0.5\height}{\includegraphics{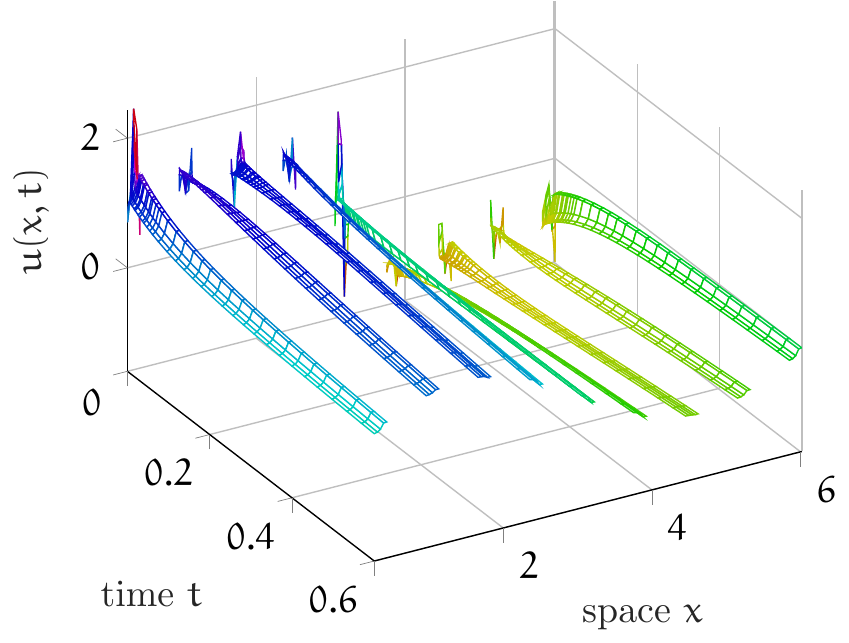}}
\(\begin{array}{r}
\kappa_{i+1/2}\\\hline
       3.965 \\
       2.531 \\
       0.838 \\
       0.331 \\
       7.275 \\
 \\\phantom{\vdots}
\end{array}\)
\(\begin{array}{r}
\text{eigenvalues}\\\hline
0\quad\\
      -0.9987 \\
      -3.9788 \\
      -8.8918 \\
      -15.654 \\
      -672.93 \\
      \vdots\quad
\end{array}\)
\end{figure}

\cref{fig:homoExample1Uxt} shows one example patch simulation in space-time of the heterogeneous diffusion~\eqref{eq:1Ddiff}.
In the initial condition at time \(t=0\) the ragged sub-patch structure is rapidly smoothed within each patch---the remaining sub-patch structure is due to the heterogeneous diffusion. 
Then the patches evolve over the shown macroscale time with the macroscale mode \(u\propto \sin x\) decaying slowest as appropriate.

\begin{remark}[Hilbert space generalisation]\label{remHilb}
Our discussion predominantly addresses the case where the field values~\(u_i\in\RR\).  
However, the arguments equally well apply to cases where the field values~\(u_i\) are in a Hilbert space, say denoted~\HH.
In that case the diffusivities~\(\kappa_{i+1/2}\) are to be interpreted as linear operators \(\kappa_{i+1/2}:\HH\to\HH\)\,, and the discourse appropriately rephrased---provided the operators are suitable.
Such a generalisation empowers much wider applicability of the results we establish, but for simplicity we mainly focus on the basic case of real~\(u_i\).
Nonetheless, \cref{sec:ens} introduces an ensemble of phase-shifted diffusions whose analysis requires the case of~\(u_i\) being in the Hilbert space of~\(\RR^p\), and similarly in \cref{sec:pd2D}.
\end{remark}

\subsection{Lagrangian spatial coupling of patches}
\label{sec:fwsc}

Spectral coupling constructs a single Fourier interpolant through all patches and then use this one interpolant to compute the edge values of each patch; thus the coupling matrix elements $\cC^{IJ}_{1n}$~and~$\cC^{IJ}_{n1}$ are nonzero for every patch $I$~and~$J$---it is a global coupling
An alternative local scheme is, for each patch, to construct an interpolant from neighbouring patches so that the inter-patch coupling occurs only over a finite local region of the spatial domain.
Such Lagrangian, or polynomial, coupling is the most common form to date \citep[e.g.,][]{Roberts13, Cao2015, Bunder2017}, but has always been constructed by interpolation of the fields (or field averages) from the centre of each patch---a scheme which does not generally maintain self-adjoint symmetry.
In contrast, our scheme to couple via next-to-edge values preserves self-adjoint symmetry.
Consequently, sensitive simulations such as the propagating waves of \cref{fig:homoWaveEdgyU2} preserve physically desirable properties, even with low-order coupling (here quadratic), and for propagation through this microscale heterogeneous medium.

\begin{figure}
\centering
\caption{\label{fig:homoWaveEdgyU2}patch simulation, with quadratic coupling, of `weakly damped' waves on a heterogeneous lattice illustrates the emergence of a coherent travelling macroscale wave from a noisy initial condition: in terms of the microscale difference operator~\(\delta_i\) (\cref{tblopids}), the system is \(\de t{u_i}=v_i\)\,, \(\de t{v_i}=\delta_i(\kappa_i\delta_i u_i)+0.02\,\delta_i^2v_i\) for log-normal~\(\kappa_i\).}
\includegraphics{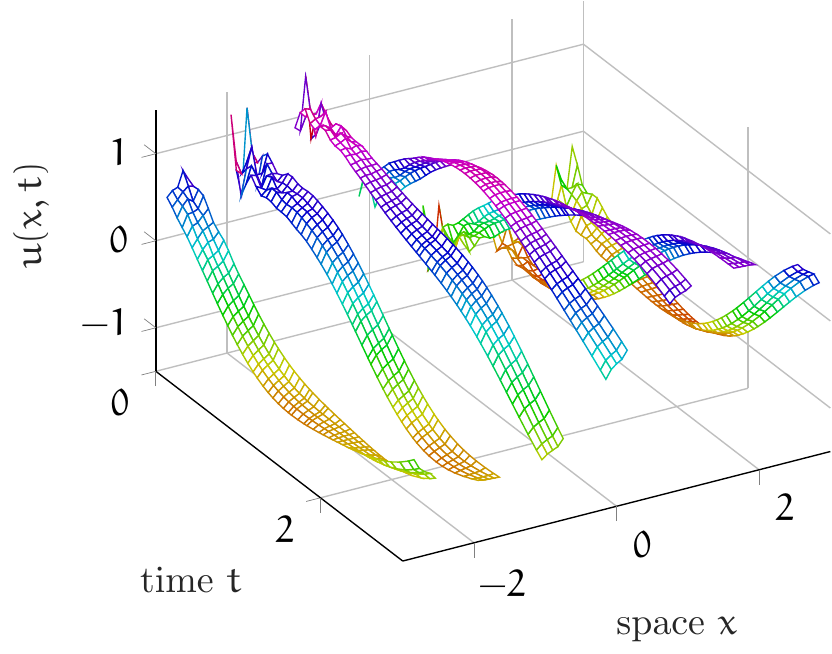}
\end{figure}

To develop the Lagrangian coupling, here we adapt the derivation of \cite{Bunder2017} to the  constraints identified by \cref{sec:sa1D} for self-adjoint coupling.
Define a macroscale step operator~$E$ which shifts a field by one macroscale step~$H$ to the right: $E u_i^I=u_i^{I+1}$.
Its inverse shifts by one macroscale step to the left: $E^{-1} u_i^I=u_i^{I-1}$. 
Define standard macroscale mean and difference operators, $\mu:=(E^{1/2}+E^{-1/2})/2$ and $\delta:=E^{1/2}-E^{-1/2}$, respectively.
These three operators are related by $\mu^2=1+\frac14\delta^2$ and $E^{\pm 1}=1\pm\mu\delta+\frac12 \delta^2$.
When operating on a field~$u_i^I$, the mean and difference operators involve patches to the left and right of patch~$I$; for example, $\mu\delta u^I_i=(u^{I+1}_i-u^{I-1}_i)/2$ and $\delta^2 u^I_i=u^{I+1}_i-2u_i^I+u^{I-1}_i$, and for every positive exponent~$K$  the operators~$\mu\delta^{2K-1}u^I_i$ and~$\delta^{2K}u^I_i$ involve patch values $u^I_i,u^{I\pm 1}_i,\ldots,u^{I\pm K}_i$.

To construct the Lagrangian self-adjoint coupling, we write the edge values in terms of known next-to-edge values for patches~$J$ near patch~$I$.
Define~$\Gamma$ as the number of coupled nearest neighbours to the left and right of each patch~$I$.
That is, patch~$I$ is coupled to the $2\Gamma+1$~closest patches, including itself, and wrapped periodically for the case of macroscale periodicity, thus forming an effective local coupling stencil of physical width~$(2\Gamma+1)H$ 
For patch~$I$, the right-edge~$u^I_{n+1}$ is a distance $h=nd=rH$ from the left-next-to-edge~$u^I_{1}$.
Consequently, in terms of the fractional macroscale shift~$E^r$, $u^I_{n+1}=E^{r}u^I_1$ determines the right-edge value by interpolating the left-next-to-edge values.
We expand the fractional shift, in powers of~$\delta$ up to order~$2\Gamma$, via $E^r=(1+\mu\delta+\tfrac12 \delta^2)^r$, where powers of~\(\mu\) are removed via the identity $\mu^2=1+\frac14\delta^2$.
Then the right-edge values are computed as
\begin{subequations}\label{eqs:ccboth}%
\begin{equation}
u^I_{n+1}=E^{r}u^I_1 \approx u^I_1+\sum_{k=1}^\Gamma\left(\prod_{\ell=0}^{k-1}(r^2-\ell^2) \right)\frac{+(2k/r)\mu\delta^{2k-1}+\delta^{2k}}{(2k)!}u^I_1\,.\label{eq:ccright}
\end{equation}
Similarly for the left-edge values, with the difference that the fractional shift is~\(E^{-r}\) from the right-next-to-edge, so that left-edge values are computed as 
\begin{equation}
u^I_{0}=E^{-r}u^I_n \approx u^I_n+\sum_{k=1}^\Gamma\left(\prod_{\ell=0}^{k-1}(r^2-\ell^2) \right)\frac{-(2k/r)\mu\delta^{2k-1}+\delta^{2k}}{(2k)!}u^I_n\,.\label{eq:ccleft}
\end{equation}
\end{subequations}
The highest-order operators are $\mu\delta^{2P-1}$ and~$\delta^{2P}$, and thus the above expressions involve patches~\(J\) where~$J$ is no more than $\Gamma$~from patch~$I$.
The coefficients of~$u^J_1,u^J_n$ in the interpolations~\eqref{eqs:ccboth} define the interpolation matrix elements~$\cI^{IJ}_{n1},\cI^{IJ}_{1n}$, respectively, as in~\eqref{eq:sacoup}, for patches $I$~and~$J$ no more than~$\Gamma$ apart. 
For greater distances between the two patches $\cI^{IJ}_{n1}=\cI^{IJ}_{1n}=0$\,.

\cref{sec:sa1D} shows that self-adjoint coupling requires symmetry $\cI^{IJ}_{n1}=\cI^{JI}_{1n}$ (all elements are real so there is no need to involve the complex conjugate).
The coupling conditions \eqref{eqs:ccboth} satisfy this symmetry constraint via the~$r$ dependence of the interpolation matrix elements: $\cI^{IJ}_{n1}(r)=\cI^{JI}_{n1}(-r)=\cI^{IJ}_{1n}(-r)=\cI^{JI}_{1n}(r)$\,.
For example, for only nearest neighbour coupling with $\Gamma=1$, the interpolations~\eqref{eqs:ccboth} give
\begin{align*}
u^I_{n+1}&=\tfrac{r}{2}(r-1)u^{I-1}_1+(1-r^2)u^I_1+\tfrac{r}{2}(r+1)u^{I+1}_1\,,\nonumber\\
u^I_{0}&=\tfrac{r}{2}(r+1)u^{I-1}_n+(1-r^2)u^I_n+\tfrac{r}{2}(r-1)u^{I+1}_n\,,
\end{align*}
so, for every patch~$I$, $\cI^{I,I-1}_{n1}=\frac{r}{2}(r-1)=\cI^{I-1,I}_{1n}$, $\cI^{I,I+1}_{n1}=\frac{r}{2}(r+1)=\cI^{I+1,I}_{1n}$, and $\cI^{II}_{n1}=1-r^2=\cI^{II}_{1n}$.
Thus the above derivation establishes the following lemma. 

\begin{lemma}\label{lemsafw}
For every order~\(\Gamma\), coupling patches with the Lagrangian interpolation~\eqref{eqs:ccboth} preserves the self-adjoint symmetry of heterogeneous systems in the form~\eqref{eq:1Ddiff}.
\end{lemma}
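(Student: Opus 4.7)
The strategy is to appeal to the reduction already established in \cref{sec:sa1D}: since the dynamics block~$\cD$ is Hermitian and \cref{A1,A2} force $\cC$ to share the same corner-only block structure as~$\cI$, self-adjointness of $\cL=\cD+\cC$ reduces to verifying the single identity $\cI^{IJ}_{n1}=\cI^{JI}_{1n}$ for all patch pairs~$I,J$. I would exploit translation invariance of the central-difference operators $\mu\delta^{2k-1}$ and $\delta^{2k}$ to write $\cI^{IJ}_{n1}=a_{J-I}(r)$ and $\cI^{IJ}_{1n}=b_{J-I}(r)$ for coefficient functions $a_m,b_m$ supported on $|m|\le\Gamma$, so that the target collapses to the two-variable identity $a_m(r)=b_{-m}(r)$.

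The first step is to prove $b_m(r)=a_m(-r)$. Comparing \eqref{eq:ccright} with \eqref{eq:ccleft}, the two expansions differ only by the sign of the term containing $(2k/r)\mu\delta^{2k-1}$, while the prefactor $\prod_{\ell=0}^{k-1}(r^2-\ell^2)$ is even in~$r$. Hence the left-edge formula is obtained from the right-edge formula by the substitution $r\mapsto -r$, which yields $b_m(r)=a_m(-r)$ immediately.

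The second step is $a_{-m}(r)=a_m(-r)$, established by a parity decomposition. I would split $a_m(r)$ into the contributions coming from the even-order stencils $\delta^{2k}$ and from the odd-order stencils $\mu\delta^{2k-1}$. The $\delta^{2k}$ piece is symmetric in the offset~$m$ (its stencil is palindromic) and its coefficient is even in~$r$; the $\mu\delta^{2k-1}$ piece is antisymmetric in~$m$ and its coefficient $(2k/r)\prod_{\ell=0}^{k-1}(r^2-\ell^2)/(2k)!$ is odd in~$r$, the $r^{-1}$ cancelling the $\ell=0$ factor of~$r^2$. In each piece the two sign changes under $m\mapsto-m$ and $r\mapsto-r$ cancel, giving $a_{-m}(r)=a_m(-r)$. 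Chaining with the previous step delivers $\cI^{JI}_{1n}=b_{I-J}(r)=a_{I-J}(-r)=a_{J-I}(r)=\cI^{IJ}_{n1}$, and the lemma follows. The main obstacle is the parity bookkeeping: one must verify that $\mu\delta^{2k-1}$ really does produce an antisymmetric central stencil in~$m$ for every~$k$ (equivalently that $\mu$ is symmetric and $\delta$ antisymmetric as central operators), and that the rational coefficient is genuinely an odd polynomial in~$r$ after the $r^{-1}$-cancellation. Once these parities are aligned, the proof is essentially a single sign-flip substitution.
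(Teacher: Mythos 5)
Your proposal is correct and follows essentially the same route as the paper: the paper likewise reduces the lemma to the identity $\cI^{IJ}_{n1}=\cI^{JI}_{1n}$ and verifies it through the chain $\cI^{IJ}_{n1}(r)=\cI^{JI}_{n1}(-r)=\cI^{IJ}_{1n}(-r)=\cI^{JI}_{1n}(r)$, i.e.\ exactly your $r\mapsto-r$ substitution combined with the even/odd parity of the $\delta^{2k}$ and $\mu\delta^{2k-1}$ stencils. The only difference is presentational: the paper asserts this symmetry chain and checks it explicitly for $\Gamma=1$, whereas you spell out the parity bookkeeping for general $k$.
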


\subsubsection{Accuracy of Lagrangian patch coupling}

For Lagrangian coupling order $\Gamma\geq 4$ plots of simulations are visually similar to those for spectral coupling such as \cref{fig:homoExample1Uxt}.
To investigate the homogenisation accuracy of the patch scheme with Lagrangian coupling, we compute the eigenvalues of the microscale heterogeneous matrix operator~$\cL$ in~\eqref{eq:diffnew} and compare them to eigenvalues obtained from spectral coupling (\cref{sec:sasc}) with otherwise identical parameters.

The modes which dominate the emergent homogenised dynamics are those corresponding to the smallest magnitude eigenvalues.
The heterogeneous diffusion system~\eqref{eq:full1Ddiff} has nonpositive eigenvalues of approximately~$-k^2$ for wavenumbers $k=0,1,2,\ldots$ in our scenarios. 
For $N$~patches a patch dynamics simulation supports~$N$ `macroscale' eigen-modes, those with eigenvalues of small magnitude.
The remaining eigenvalues are of large magnitude, and represent rapidly decaying sub-patch modes that are of negligible interest.
Generally, there is a large gap between the microscale and macroscale eigenvalues, typically~\(\propto 1/r^2\).
In the scenarios reported here, typically the ratio between the large magnitude microscale eigenvalues and the small magnitude macroscale eigenvalues is of the order of~$100$.
So our focus here is assessing the accuracy of the first few smallest magnitude  eigenvalues compared to the homogenised dynamics.

\begin{figure}
\centering
\caption{\label{fig:homoEigenFixr1}%
Log-log plot of relative errors of the first five unique nonzero macroscale eigenvalues of~$\cL$ in~\eqref{eq:diffnew}  with coupling order $\Gamma=5$, domain size~$2\pi$, and the period five diffusion coefficients of \cref{fig:homoExample1Uxt}, but different numbers of patches~$N$ and patch sizes~$n$ while keeping the same microscale spacing $d=rH/n$ for all data.
The reference eigenvalues to the right are from spectral coupling with $r=0.1$\,.
The markers $\circ,\times,{+}$ correspond to patch size ratios $r=0.1,0.4,0.8$\,, respectively, and the solid lines join \(r=0.1\) data.
Dashed grey lines display the power law~$N^{-10}$, that is,~\(H^{10}\).
}
\includegraphics{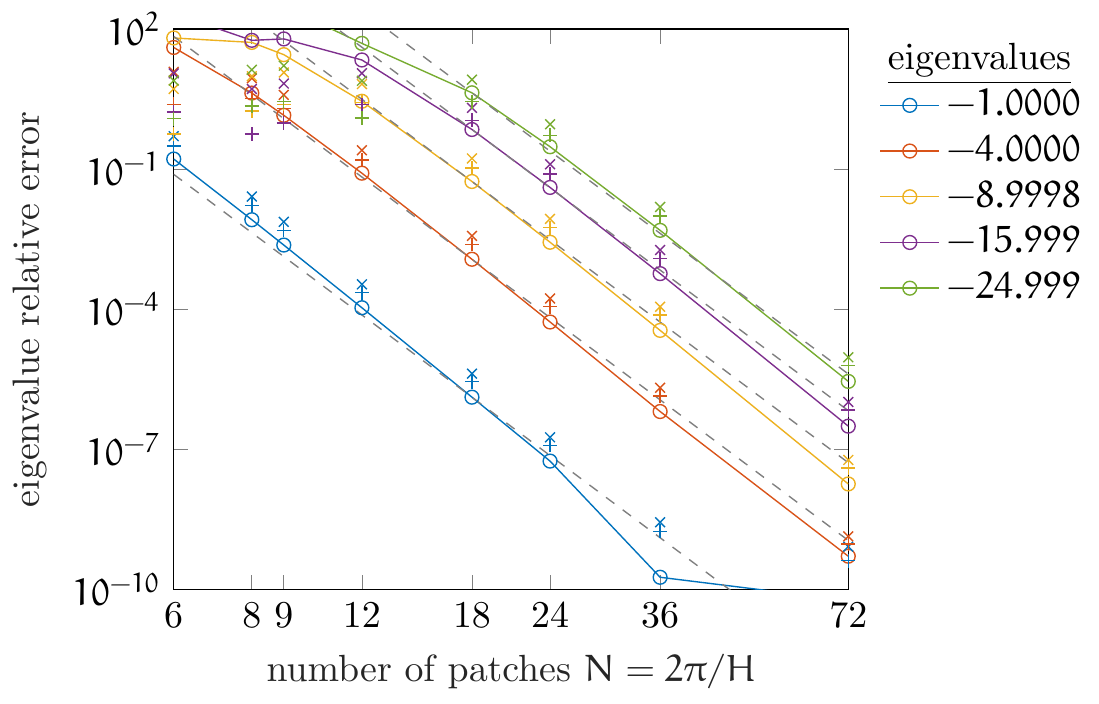}
\end{figure}

For a given order~\(\Gamma\) of interpolation, the main factor determining the accuracy of the computational homogenised simulation is the macroscale~$H$ which defines the inter-patch spacing.
As in classic numerical considerations, the reason is simply that closer spacing better interpolates the \emph{macroscale} structures.
We explore a domain of fixed length~\(2\pi\) so that the inter-patch spacing~\(H\) decreases as the number of patches~$N$ increases.
\cref{fig:homoEigenFixr1} illustrates that as the spacing decreases the patch scheme more accurately determines the lowest magnitude nonzero macroscale eigenvalues.
We do not show errors for the zero eigenvalue because this eigenvalue is always calculated to have magnitude no more than~$10^{-9}$, which is essentially zero (to round-off error). 
For the non-zero eigenvalues, \cref{fig:homoEigenFixr1} shows that their error decreases as the power-law~$\sim H^{10}\propto N^{-10}$ as expected for the eleven patch stencil width of the coupling order \(\Gamma=5\).

The solid lines of \cref{fig:homoEigenFixr1} highlight the data at fixed patch size ratio \(r=0.1\)\,.
That is, as the inter-patch spacing is decreased, the patch size is proportionally decreased.
The microscale lattice spacing~\(d\) is fixed for all of \cref{fig:homoEigenFixr1}, so the underlying heterogeneous microscale system is the same for all data in the figure. 
The figure also plots errors for other patch size ratios \(r=0.4,0.8\), and this data verifies that the errors in the patch scheme have only a weak dependence upon the patch size. 
That is, for computational efficiency, choose as small a patch as necessary to resolve \text{the microscale dynamics.}

\begin{figure}
\centering
\caption{\label{fig:homoEigen1}%
Relative errors of the first five nonzero macroscale eigenvalues obtained from different widths of inter-patch coupling.
Here the errors are for $N=20$ patches on a domain of~$2\pi$, with patch size \(n=5\),  and patch size ratio \(r=0.1\)\,.
The dashed grey lines approximate the data by error${}\approx 20 k^{\alpha\Gamma} e^{-2\alpha\Gamma}$ for constant $\alpha=1.9$ and wavenumber $k=1:5$\,.
Spectral coupling gives the reference eigenvalues list in the right-hand column.
The diffusion coefficients~\(\kappa_{i+1/2}\), with period five, are the same as those of \cref{fig:homoExample1Uxt}. 
}
\tikzsetnextfilename{Figs/eigenerrord5-20-10}
\includegraphics{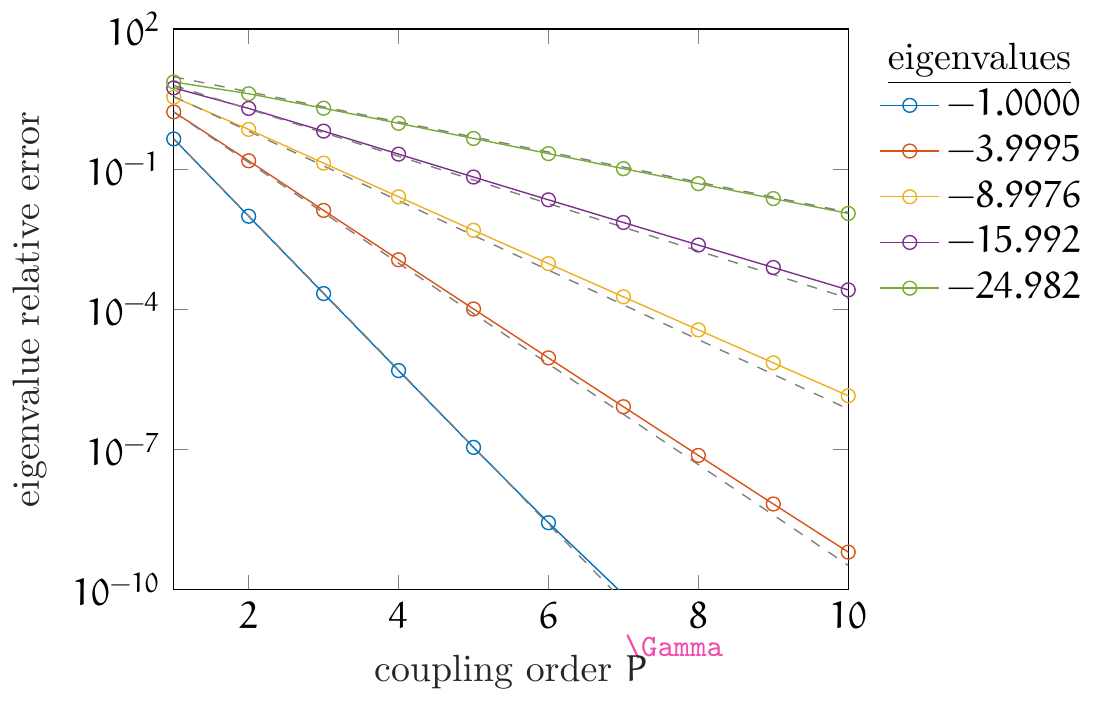}
\end{figure}

In application to some large-scale physical scenario we would require the patch scheme to resolve spatial structures on some macroscale, and so you choose the inter-patch spacing~\(H\) accordingly.
For a given domain this determines the number of patches.
Then choose an inter-patch coupling order~$\Gamma$, for the Lagrangian spatial coupling~\eqref{eqs:ccboth}, to suit your desired error in predictions by the patch scheme. 
\cref{fig:homoEigen1} demonstrates that the errors for the macroscale modes of the patch scheme decrease exponentially quickly in the order~$\Gamma$ of inter-patch coupling.
The lessened rate of the exponential decrease for higher wavenumbers, here \(\text{error}\propto\exp\big[-\alpha(2-\log k)\Gamma\big]\), is due to the smaller scale macroscale modes having fewer patches to resolve their structure.
\cref{fig:homoEigen1} is for $N=20$ patches with size ratio $r=0.1$\,, but other parameter choices produce much the same plot:  in the computational homogenisation of the patch scheme, relative errors generally decrease exponentially with the order~$\Gamma$ of Lagrangian inter-patch coupling.

\subsection{An ensemble removes periodicity limitation}
\label{sec:ens}

\cref{sec:sa1D} deduced that this new patch scheme preserves a self-adjoint heterogeneous system when the size~\(n\) of the patch is an integral multiple of the diffusivity period~\(p\).
This section proves that by using an ensemble of phase-shifts of the diffusivities, the patch scheme can still preserve self-adjoint symmetry \emph{without} requiring the integral multiple limitation.
As long recognised in Statistical Mechanics, a rigorous route to modelling is by considering an ensemble \cite[e.g.][]{vanKampen92, Sethna2010}.

We consider an ensemble with \(p\)-members of the heterogeneous system~\cref{eq:full1Ddiff} with each member of the ensemble distinguished by a different phase shift of the \(p\)-periodic diffusivities~\(\kappa_{i+1/2}\).
Importantly, do not think of this as an ensemble of a patch scheme for~\cref{eq:full1Ddiff}, but instead think of it as a patch scheme applied to an ensemble of~\cref{eq:full1Ddiff}.
In multiscale modelling ensembles constructed from microscale phase shifts are useful in many different contexts; for example, \citet{Runborg02} applied projective integration in a coarse bifurcation analysis of an evolution equation with spatially varying coefficients, with an ensemble constructed from phase shifts in \emph{time}, in contrast to the spatial phase shifts considered here.

To form the ensemble let \(u_{i,\ell}(t)\) be the field value at location~\(x_i\) in the \(\ell\)th~member of the ensemble, for \(\ell=0:p-1\).
These satisfy all phase-shifts of the diffusivities in the \ode{}s~\eqref{eq:full1Ddiff}, namely
\begin{equation}
d^2\partial_t u_{i,\ell}=\kappa_{i+\ell+\frac12}(u_{i+1,\ell}-u_{i,\ell})+\kappa_{i+\ell-\frac12}(u_{i-1,\ell}-u_{i,\ell}).
\label{eq:full1DdiffE}
\end{equation}
Throughout, the diffusivity subscripts are to be interpreted modulo their periodicity~\(p\). 
Form patches of this ensemble-system as in \cref{fig:1Dpatch} with \(u^I_{i,\ell}(t)\) denoting the evolving field values in the \(I\)th~patch at spatial location~\(x_i^I\).
\cref{fig:1Dcoupling43} illustrates five patches in the case of an ensemble of three members for patches of size \(n=4\) of a system with diffusivity period \(p=3\)\,.

\begin{figure}
\centering
\caption{\label{fig:1Dcoupling43}%
Five patches of the three member ensemble of phase-shifts in the case of diffusivity period $p=3$ and $n=4$ patch interior points:  the diffusivities are $\kappa_{1/2},\kappa_{3/2},\kappa_{5/2}$.
Lattice points and patches in the same column are at the same physical location. 
For the middle patch in each member, \(u\)-values at the edges  $i={\color{blue} 0},{\color{red} 5}$ (unfilled {\color{blue} triangles}\slash {\color{red} circles}) are interpolated from  \(u\)-values of next-to-edge points $i={\color{blue} 4},{\color{red} 1}$ (filled {\color{blue} triangles}\slash {\color{red} circles}) in neighbouring patches, but from a different member in order to preserve self-adjoint symmetry.}
\includegraphics{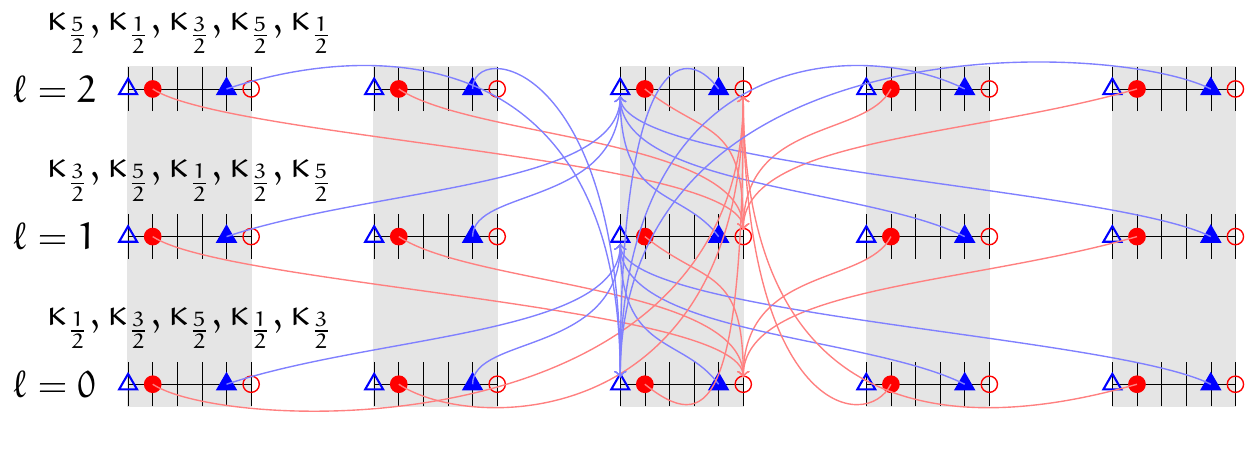}
\end{figure}

\cref{fig:1Dcoupling43} illustrates the proposed interpolation of edge values from next-to-edge values in the ensemble, and shows a tangle of dependencies.  
This inter-patch communication arises in the following way based upon the analysis and notation of the previous \cref{sec:sa1D,sec:sasc,sec:fwsc}.
The previous symbol~\(u^I_i\) here denotes the ensemble vector \((u^I_{i,0},\ldots,u^I_{i,p-1})\in\RR^P\).
The previous diffusivity~\(\kappa^I_{i\pm1/2}\) is here to denote the diffusivity matrix \(\diag(\kappa_{i\pm1/2},\ldots,\kappa_{i+p-1\pm1/2})\in\RR^{p\times p}\) (subscripts modulo~\(p\) as always).
Then the patch scheme applied to the heterogeneous ensemble~\eqref{eq:full1DdiffE} is symbolically the \ode{}s~\eqref{eq:1Ddiff} but here interpreted as matrix-vector \ode{}s instead \text{of scalar \ode{}s.}

Consequently, all the arguments and results of the previous \cref{sec:sa1D,sec:sasc,sec:fwsc} apply here also.
Except that we now have extra freedom in the patch-coupling interpolation matrix.
Previously the symbol~\(\cC^{IJ}_{ij}\) was a scalar, but here represents a \(p\times p\) block in the ensemble system's \(nNp\times nNp\) matrix. 
Thus we have the freedom to choose the crucial interpolation blocks \(\cC^{IJ}_{n1},\cC^{IJ}_{1n}\) in non-diagonal form.
The tangle of inter-patch communication in \cref{fig:1Dcoupling43} represents a non-diagonal~\(\cC^{IJ}_{n1},\cC^{IJ}_{1n}\).

We choose~\(\cC^{IJ}_{n1},\cC^{IJ}_{1n}\) to preserve self-adjoint symmetry, and \cref{A1,A2}.  
In the \(\ell\)th~member of the ensemble, the \(\ell\)th~row of \cref{fig:1Dcoupling43}, has diffusivity~\(\kappa_{n+\ell+1/2}\) (subscript~\(\mod p\)) at the right-edge of patches.
So the interpolation from its right-next-to-edges is chosen to determine the left-edge values of member~\((n+\ell \mod p)\) because it has the same diffusivity~\(\kappa_{n+\ell+1/2}\) at its left-edge.
Correspondingly in reverse for interpolation from left-next-to-edges to right-edge values.
Hence, setting the \(p\times p\) matrix~\(K\) to zero except for \(K_{\ell+1,(\ell+n\mod p)+1}:=\kappa_{\ell+1/2}\)\,, $\ell=0:p-1$\,, let's choose 
\begin{equation}
\cC^{IJ}_{1n}:=K\cI^{IJ}_{1n}
\quad\text{and}\quad 
\cC^{IJ}_{n1}:=K^\dag\cI^{IJ}_{n1}
\label{eq:enscoup}
\end{equation} 
in terms of the scalar interpolation coefficients~\(\cI^{IJ}_{ij}\) of the  interpolation schemes of \cref{sec:sasc,sec:fwsc}.
Because of this choice of~\(K\), the above argument establishes the following lemma.
\begin{lemma}
The inter-patch coupling with~\eqref{eq:enscoup} preserves self-adjoint symmetry in the ensemble of heterogeneous diffusion systems~\eqref{eq:full1DdiffE}.
\end{lemma}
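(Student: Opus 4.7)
The plan is to mirror the decomposition of \cref{sec:sa1D}, writing the ensemble operator as $\cL=\cD+\cC$ where now every previously scalar block is a $p\times p$ matrix acting on the ensemble vector $(u^I_{i,0},\ldots,u^I_{i,p-1})\in\RR^p$. I then need to verify that both $\cD$ and $\cC$ are Hermitian.

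First I would dispatch $\cD$. The ensemble \ode{}s~\eqref{eq:full1DdiffE} do not couple distinct ensemble members, so $\cD$ is in fact block-diagonal both across patches and across members; within each (patch, member) block the structure is exactly the tridiagonal form~\eqref{eq:D} with diffusivities relabelled by the shift~$\ell$. Each such block is real and symmetric, so $\cD^\dag=\cD$.

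The substantive step is to show that $\cC$ is self-adjoint. Repeating the reasoning of \cref{sec:sa1D} at the block level, \cref{A1,A2} again force $\cC^{IJ}_{ij}=0$ whenever $(i,j)\notin\{(1,n),(n,1)\}$, with the $(1,1)$ and $(n,n)$ corners ruled out by the identical distance-asymmetry argument applied to the real interpolation weights. It remains to verify $(\cC^{IJ}_{1n})^\dag=\cC^{JI}_{n1}$. Substituting~\eqref{eq:enscoup} and using that the scalar interpolation coefficient~$\cI^{IJ}_{1n}$ is real gives
\begin{equation*}
(\cC^{IJ}_{1n})^\dag = (K\cI^{IJ}_{1n})^\dag = \cI^{IJ}_{1n}\,K^\dag,
\qquad
\cC^{JI}_{n1} = K^\dag\,\cI^{JI}_{n1},
\end{equation*}
and these are equal by the scalar symmetry $\cI^{IJ}_{1n}=\cI^{JI}_{n1}$ established in \cref{lemsasp,lemsafw} for spectral and Lagrangian coupling respectively.

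The step requiring the most care, and the main conceptual obstacle, is checking that~\eqref{eq:enscoup} remains compatible with \cref{A1}: the matrix~$K$ must feed each ensemble member's edge value into the microscale \ode\ with the correct diffusivity prefactor. The nonzero pattern $K_{\ell+1,(\ell+n\bmod p)+1}:=\kappa_{\ell+1/2}$ is engineered precisely so that member~$\ell$'s left-edge diffusivity~$\kappa_{\ell+1/2}$ is paired with the right-next-to-edge value of member $(\ell+n\bmod p)$, which sits under the same diffusivity by the $n$-step phase shift of the $p$-periodic heterogeneity. This is the ensemble-level replacement of the scalar constraint $\kappa^I_{1/2}=\kappa^J_{n+1/2}$ that previously forced~$n$ to be a multiple of~$p$; here it is absorbed into a cyclic permutation of ensemble members and therefore holds for every~$n$ and~$p$, which is exactly what makes the lemma remove the periodicity limitation.
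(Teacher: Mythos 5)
Your proposal is correct and follows essentially the same route as the paper: the paper's justification is precisely that the scalar arguments of \cref{sec:sa1D,sec:sasc,sec:fwsc} lift verbatim to $p\times p$ blocks, with the Hermitian check reducing to $(K\cI^{IJ}_{1n})^\dag=K^\dag\cI^{JI}_{n1}$ via the reality and symmetry $\cI^{IJ}_{1n}=\cI^{JI}_{n1}$ of \cref{lemsasp,lemsafw}, and with the sparsity pattern of~$K$ chosen so that the cyclic $n$-step phase shift matches edge diffusivities across ensemble members, replacing the scalar constraint $\kappa^I_{1/2}=\kappa^J_{n+1/2}$. Your write-up is in fact somewhat more explicit than the paper's, which leaves the block-level Hermitian verification implicit.
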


\begin{figure}
\centering
\caption{\label{fig:homoExampleEns}
the ensemble-mean field~\(\bar u(x,t)\) in a simulation of an ensemble of heterogeneous diffusion~\eqref{eq:full1DdiffE} on nine patches with spectral coupling~\eqref{eqs:1Dspec} and patch-ratio \(r=0.3\), for comparison with \cref{fig:homoExample1Uxt}.  
The diffusion coefficients~\(\kappa_{i+1/2}\) with period \(p=5\) are as in \cref{fig:homoExample1Uxt}, but here there are just \(n=4\) patch interior points, as plotted.
}
    \includegraphics{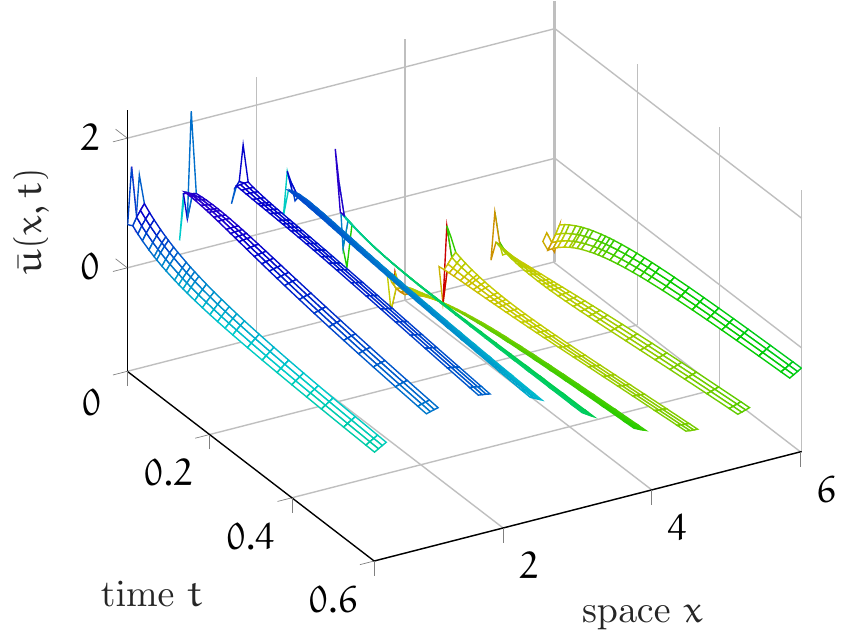}
\end{figure}

As an example, \cref{fig:homoExampleEns} plots a simulation of our patch scheme applied to heterogeneous diffusion with diffusivity period $p=5$ and the same diffusion coefficients as listed in \cref{fig:homoExample1Uxt}. 
With the exception of the number of patch interior points, which here is \(n=4\), the parameters of \cref{fig:homoExampleEns} are the same as in \cref{fig:homoExample1Uxt}. 
At each spatial point~\(x_i^I\) there are five ensemble values~\(u^I_{i,0},\ldots,u^I_{i,4}\) so \cref{fig:homoExampleEns} plots the ensemble-mean \(\bar u(x^I_i,t):=\frac1p\sum_{\ell=0}^{p-1}u^I_{i,\ell}(t)\).
In the simulation of \cref{fig:homoExampleEns}, all five members of the ensemble were given the same initial condition, and this initial condition is the same as that for \cref{fig:homoExample1Uxt} but with a different random component. 
Because of the average over the ensemble, \cref{fig:homoExampleEns} does not exhibit the rough microscale structure shown in \cref{fig:homoExample1Uxt} that arises from just one phase of the heterogeneous diffusivity.

To verify the accuracy of the patch scheme applied to the ensemble of phase-shifts, we investigated the accuracy of the small-magnitude eigenvalues that correspond to the macroscale modes in the computational scheme.
Over a range of coupling orders~$\Gamma$, we plotted the relative errors of nonzero macroscale eigenvalues of ensemble matrix~$\cL$ for Lagrangian spatial coupling~\eqref{eqs:ccboth} (relative to the spectral eigenvalues~\eqref{eqs:1Dspec}).
The plots were graphically indistinguishable from that of \cref{fig:homoEigen1}---so are not reproduced here.
The crucial difference is that here we used an ensemble of patches of size $n=4$ points---a size which is not an integral multiple of the diffusivity period \(p=5\). 
Evidently the patch scheme applied to an ensemble for general patch size of \(n\)~points appears to be just as accurate as for the well-known special case of \(n\) being an integral multiple of the period~\(p\).
\cref{sec:theory} establishes this \text{accuracy in general.}

Invoking this ensemble of all phase-shifts of the diffusivities allows any size patch, any~$n$, while appearing to maintain the full accuracy of the computational homogenisation supplied by the patch scheme, albeit with an increase in the computation.
We conjecture that if the heterogeneous diffusivities is random, with no period, then an ensemble of realisations will provide more accurate predictions than a single realisation.

\section{The patch scheme is consistent to high-order}
\label{sec:theory}

This section develops theoretical support for the accuracy of the patch scheme with self-adjoint coupling. 
But before we discuss patches, we reconsider the heterogeneous diffusion~\eqref{eq:full1Ddiff} and its ensemble of all phase shifts~\eqref{eq:full1DdiffE}.
\cref{sssech} establishes that the ensemble system describes the correct macroscale homogenised behaviour.
\cref{sec:con} then proves that the patch scheme with self-adjoint coupling of the ensemble is consistent with the ensemble system, and hence with the heterogeneous diffusion~\eqref{eq:full1Ddiff}.

In this and the next section, the term ``consistent'' means ``consistent with arbitrarily high-order in~\(H\)'', unless otherwise specified.

We use consistency to assess accuracy of the patch scheme because the usual numerical analysis, of the finite element and finite volume methods, involves integrals over space which in a patch scheme would be integrals over much uncomputed space and so appear to be inappropriate for a patch scheme. 

\cref{sec:ens} creates the ensemble of phase shifts by directly `stacking vertically' each member in the ensemble, as illustrated by \cref{fig:1Dcoupling43}.
This appears best for computation.
Best because a user codes the lattice dynamics in the `horizontal' direction as usual, and then the tangle of inter-patch communication (\cref{fig:1Dcoupling43}) may be managed by a generic patch function as in our toolbox \cite[]{Roberts2019b}.

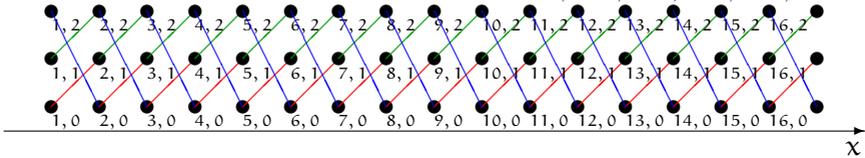
\begin{figure}
\centering
\caption{\label{figembedshifts}embed the heterogeneous diffusion~\eqref{eq:full1Ddiff} into \(p\)~replications of itself, here for the case \(p=3\).
The circles denote values \(v_{i,\ell}(t)\) for the example labelled values~\(i,\ell\), and the coloured lines denote the heterogeneous diffusion between~\(v_{i,\ell}\) that both replicates~\eqref{eq:full1Ddiff}, but is now spatially homogeneous, in~\(x\), on the lattice.
Over~$x$, the fields~\(v_{i,\ell}(t)\) of any one of the $p=3$ phases follow the coloured diagonals representing the heterogeneous diffusion for that phase, for example, one phase has fields \(v_{1,1}, v_{2,2},v_{3,3},v_{4,1},v_{5,2},\ldots\)}
\setlength{\unitlength}{0.01\linewidth}
\def\dx{5}\def\p{3}\def\pm{2}
\begin{picture}(90,16)
\put(0,2.5){\vector(1,0){90}}
\put(88,0){\(x\)}
\setcounter{j}0
\multiput(5,5)(0,\dx){\p}{%
    \multiput(0,0)(\dx,0){17}{\circle*{1.5}}
    \setcounter{i}0
    \multiput(0,-2)(\dx,0){16}{\stepcounter{i}%
        \tiny$\arabic{i},\arabic{j}$}
    \multiput(0,0)(\dx,0){16}{\ifcase\value{j}%
        \color{red}\line(1,1){\dx}
        \or\color{green!60!black}\line(1,1){\dx}
        \or\color{blue}\line(1,-\pm){\dx}
        \fi}
    \stepcounter{j}
}
\end{picture}
\end{figure}

However, for theoretical analysis it appears best, and we take this route here, to form the ensemble so that each member of the ensemble is `wrapped diagonally', as illustrated by \cref{figembedshifts} for the case of periodicity \(p=3\)\,.
In terms of~\(u_{i,\ell}\) of \cref{sec:ens},
define \(v_{i,\ell}(t):=u_{i,(i-\ell \mod p)}\). 
That is, \(v_{i,\ell}\)~is a field value at position~\(x_i\), for \(\ell=0:p-1\)\,, and \(\phi=i-\ell\mod p\) identifies the physical phase-shift for the member of the ensemble.
According to~\eqref{eq:full1DdiffE} these evolve in time according to the \ode{}s
\begin{equation}
d^2\partial_t v_{i,\ell}=\kappa_{\ell+1/2}(v_{i+1,\ell+1}-v_{i,\ell})+\kappa_{\ell-1/2}(v_{i-1,\ell-1}-v_{i,\ell})\,,
\label{eq:mbd1Ddiff}
\end{equation}
where we adopt the convention that both the subscript of~\(\kappa\) and the second subscript of~\(v\) is always interpreted modulo the microscale periodicity~\(p\).
Consequently, \(u_i(t):=v_{i,i+\phi}\) then satisfies the original diffusion~\eqref{eq:full1Ddiff}, but with the diffusivities phase shifted by~\(\phi\).
That is, the system~\eqref{eq:mbd1Ddiff} for~\(v_{i,\ell}\) captures the \(p\)~microscale-phase shifted versions~\eqref{eq:full1DdiffE} of the original heterogeneous diffusion~\eqref{eq:full1Ddiff}.
As in macroscale modelling \cite[\S2.5]{Roberts2013a}, the big advantage of~\eqref{eq:mbd1Ddiff} is that the heterogeneity only occurs in~\(\ell\) and is independent of, \emph{homogeneous} in, the  spatial index~$i$. 
This homogeneity is unlike systems \eqref{eq:full1Ddiff,eq:full1DdiffE} where the heterogeneity explicitly varies with the spatial index.
The homogeneity of~\eqref{eq:mbd1Ddiff} in spatial index~\(i\) is crucial in \text{developing theory.}

\subsection{An ensemble has the correct homogenisation}
\label{sssech}
The aim of this subsection is to establish that the solutions of the ensemble system~\eqref{eq:mbd1Ddiff} track solutions of an effective `homogenised' \pde\ \(V_t=\cK_2 V_{xx}\) for a field~\(V(x,t)\) and an effective diffusivity~\(\cK_2\).

We analyse the long-time dynamics via the Fourier transform in space \cite[\S7.2 and Exercise~7.5]{Roberts2014a}.
For grid-scaled spatial Fourier wavenumber~\(k\) we seek solutions \(v_{i,\ell}(t)=\int_{-\pi}^\pi e^{\i ki}\tv_\ell(k,t)\,dk\) (for \(\i:=\sqrt{-1}\), distinct from lattice index~\(i\)).
Because of the linear independence of~\(e^{\i ki}\), \eqref{eq:mbd1Ddiff}~becomes, for every wavenumber~\(k\),
\begin{equation}
d^2\partial_t \tv_{\ell}=\kappa_{\ell+\frac12}(e^{\i k}\tv_{\ell+1}-\tv_{\ell})+\kappa_{\ell-\frac12}(e^{-\i k}\tv_{\ell-1}-\tv_{\ell}),
\quad \ell=0:p-1\,.
\label{eq:mbd1DdiffF}
\end{equation}
This system has a subspace of equilibria for wavenumber \(k=0\) and \(\tv_\ell={}\)constant.
Since the problem is linear in~\(\tv\), without loss of generality we analyse the case of equilibrium \(\tv_\ell=0\) \cite[\S7.2]{Roberts2014a}.
When wavenumber \(k=0\) the system~\cref{eq:mbd1DdiffF} has one eigenvalue of zero, and \((p-1)\)~negative eigenvalues \(\lambda\leq-\beta\) for bound \(\beta=2\pi^2\min_\ell\kappa_{\ell+1/2}/(p^2d^2)\).
Notionally adjoining \(dk/dt=0\), we deduce there exists an emergent slow manifold of~\(\cref{eq:mbd1DdiffF}\) for a range of small~\(k\), and globally in~\(\tv\) \cite[]{Carr81}.

Straightforward construction \cite[][as in Exercise~7.5]{Roberts2014a} leads to the evolution on the slow manifold, in terms of some chosen parameter~\(\tV(k,t)\).
Here we describe the evolution on the slow manifold in terms of the mean Fourier component \(\tV(k,t):=\frac{1}{p}\sum_\ell\tv_\ell(t)\). We seek the evolution of~\(\tV\)\ in terms of a power series in small wavenumber~\(k\), up to some specified order.
For given diffusivities and periodicity~$p$, the computer algebra in \cref{appmh1dd} efficiently derives the evolution via an iteration, based upon the residual of~\eqref{eq:mbd1DdiffF}, from the initial approximation that $\partial_t\tV\approx0$ and $\tv_\ell\approx\tV$ for every $\ell=0:p-1$\,.
Subsequent iterations provide corrections in terms of $\tV$~and~powers of~$k$.
In general, the dynamical evolution on the slow manifold is then of the form 
\begin{equation}
d^2\partial_t \tV=-k^2\cK_2\tV+k^4\cK_4\tV+\cdots\,,
\label{eq:echft}
\end{equation}
for \(\cK_2:=n/\big(\sum_\ell\kappa_{\ell+1/2}^{-1}\big)\), and some complicated~\(\cK_4\).
As an example, for periodicity \(p=3\) and \(\kappa_{\ell+1/2}=\ell+1\)\,; \cref{appmh1dd} computes  \(\cK_2=\frac{18}{11}\) and \(\cK_4=\frac{675}{2662}\)\,.

We obtain a physical space \pde\ for the macroscale by integrating over small wavenumbers \cite[][Exercise~7.5]{Roberts2014a}.
Let \(k_c\)~denote a cut-off wavenumber, suitable to capture the macroscales of interest, and define \(V(x,t):=\int_{-k_c}^{k_c}e^{\i kx/d}\tV(k,t)\,dk\)\,.
Upon correspondingly integrating~\eqref{eq:echft}, we deduce the emergent slow manifold dynamics is equivalently governed by the `homogenised' \pde
\begin{equation}
\D tV=\cK_2\DD xV+\cK_4d^2\Dn x4V+\cdots\,.
\label{eq:slowman}
\end{equation}
The coefficient~\(d^2\cK_4\) indicates that our approach supports not only the classic diffusion homogenisation as the microscale \(d\to0\), but also establishes corrections at finite~\(d\).
Such higher-order corrections are needed in some homogenisation applications \cite[e.g.,][]{Cornaggia2020}.
We also contend that the technique of \cite{Roberts2013a} would extend to provide, at finite scale separation~\(d\), a rigorous error formula for any finite truncation of this asymptotic series, and would also do so for nonlinear systems.

This approach rigorously establishes that the classic homogenisation of the ensemble system~\cref{eq:mbd1Ddiff} is the leading approximation to the evolution of long wavelength structures on the lattice.
Moreover, the centre manifold emergence theorem \cite[e.g.,][\S4.3]{Roberts2014a} assures us that all solutions approach this homogenisation on a cross-period diffusion time\({}\propto1/\beta\).

\subsection{High-order consistency of the patch scheme}
\label{sec:con}

This section establishes that when the phase-shifted ensemble system~\cref{eq:mbd1Ddiff} (\cref{figembedshifts}) is restricted to patches, with self-adjoint coupling as in \cref{fig:embedn3}, the resulting patch system maintains consistency with~\cref{eq:mbd1Ddiff}.
The analysis shows that any errors arising in the scheme are due to the choice of coupling condition. 
The advantage of the ensemble~\eqref{eq:mbd1Ddiff} for the patch scheme is that the microscale system within patches is homogeneous in the \(x\)-index~\(i\); the heterogeneity in the original diffusion~\cref{eq:full1Ddiff} only appears here in the cross-section indexed by~$\ell$ (\cref{fig:embedn3}).

\begin{figure}
\setlength{\unitlength}{0.0097\linewidth}
\def\Hx{35} \def\dx{4.5}
\def\p{3}\def\pm{2}
\def\n{4}\def\np{5}\def\npp{6}
\def\hx{18}%
\centering
\caption{\label{fig:embedn3} example of patch scheme (\cref{fig:1Dpatch}) with \(n=\n\) points in each patch, for an ensemble with heterogeneity of period \(p=\p\) (\cref{figembedshifts}).  
These homogeneous patches are coupled, as in \cref{fig:1Dcoupling}, by interpolation to the {left}\slash{right} edge of a patch from the next-to-edge points on the {right}\slash{left} of nearby patches and always to points with the same~$\ell$. 
For each $\ell=0:p-1$\,, the value~$v^I_{\np,\ell}$ is interpolated from values~$v^J_{1,\ell}$ from nearby patches~\(J\), and $v^I_{0,\ell}$ is interpolated from~$v^J_{\n,\ell}$.
}
\begin{picture}(102,20)
\put(2,0){%
\put(0,2){\vector(0,1){17}}\put(-2,17){$\ell$}
\setcounter{j}0
\multiput(-2,4.5)(0,\dx)\p{\small$\arabic{j}$\stepcounter{j}}
\put(0,2.5){\vector(1,0){100}}
\put(98,0){\(x\)}
\setcounter{I}0
\multiput(2,5)(\Hx,0)3{\stepcounter{I}%
  \put(8,-5){$\ifcase\value{I}\or I-1\or I\or I+1\fi$}
  \setcounter{j}0
  \multiput(0,0)(0,\dx){\p}{%
    \setcounter{i}0
    \multiput(0,0)(\dx,0){\npp}{%
        \ifnum\value{i}=0\circle{1.5}
        \else\ifnum\value{i}=\np\circle{1.5}
        \else\circle*{1.5}
        \fi\fi%
        \stepcounter{i}}
    \setcounter{i}0
    \multiput(0,-2)(\dx,0){\npp}{%
        \tiny$\arabic{i},\arabic{j}$%
        \stepcounter{i}}
    \multiput(0,0)(\dx,0){\np}{\ifcase\value{j}%
        \color{red}\line(1,1){\dx}
        \or\color{green!50!black}\line(1,1){\dx}
        \or\color{blue}\line(1,-\pm){\dx}
        \fi}
  \stepcounter{j}
  }
}
\put(2,5){\multiput(\Hx,0)(\Hx,0)2{\color{gray}
    \multiput(-8,-1)(0,\dx)\p{$\longleftrightarrow$}
}}
\put(4.25,16){\vector(1,0){\hx}\vector(-1,0){\hx}}
\put(8,17){$h=nd$}
\put(2,16){\put(\Hx,0){\vector(1,0){\Hx}\vector(-1,0){\Hx}}
    \put(\Hx,0){\put(17,1){$H$}}
}}
\end{picture}
\end{figure}
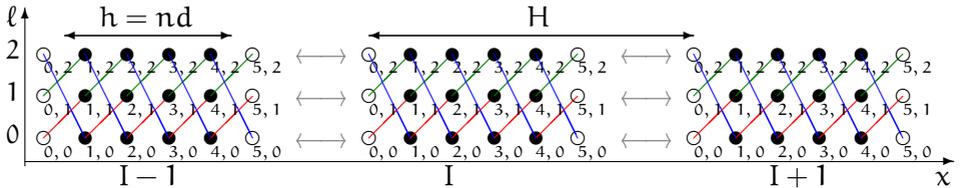

Corresponding to \cref{sec:pd1D}, let the macroscale spacing in~\(x\) of the patches be~\(H\)---much larger than the microscale lattice spacing~\(d\).
Let the patches have \(n\)~interior points so the patches are of width \(h:=nd\) in~\(x\), and let the ratio \(r:=h/H\).
Also let \(v^I_{i,\ell}(t)\)~to denote the field value at the \((i,\ell)\)th~point of the lattice within the \(I\)th~patch (\cref{fig:embedn3}).
For generality, let the vector~\(\vv(t)\in\RR^p\) denote the vector of \(p\)~values at each~\(x\), and then for patches the vector \(\vv^I_i:=(v^I_{i,0},\ldots,v^I_{i,p-1})\).
That is, here the vector~\vv\ corresponds to the scalar~\(u\) of \cref{sec:pd1D}.
We now use shift, difference, and mean operators as defined by \cref{tblopids} to express and analyse this patch scheme for the ensemble.

\begin{table}
\caption{\label{tblopids}Useful operator identities based upon the shift~\(E\) which shifts an operand to the right by some `distance'~\(\Delta\)  \protect\cite[p65, e.g.]{npl61}.
A subscript on any of the listed operators defines the distance~$\Delta$.
For example, \(E_i\)~increases the index~\(i\) by one (distance \(\Delta=d\)),  and \(E_x\) increases the position~\(x\) by~$H$  (distance \(\Delta=H\)).
}
\begin{equation*}
\begin{array}{rlrl}
\hline\vphantom{E^{1^1}}
&Eu(x):=u(x+\Delta)
&&E^{\pm1}=1\pm\mu\delta+\tfrac12\delta^2
\\&\delta:=E^{1/2}-E^{-1/2}
&&\mu:=\tfrac12(E^{1/2}+E^{-1/2})
\\&\delta=2\sinh(\Delta\partial_x/2)
&&\mu=\cosh(\Delta\partial_x/2)
\\&\Delta\partial_x=2\asinh(\delta/2)
&&\mu^2=1+\tfrac14\delta^2
\\[0.5ex]\hline
\end{array}
\end{equation*}
\end{table}

\begin{lemma}\label{lemlatvec}
The ensemble system~\cref{eq:mbd1Ddiff} is of the vector form
\begin{equation}
\partial_t\vv_i=\delta_i\big(\Kp\ E_i^{1/2}-\Km E_i^{-1/2}\big)\vv_i+D\vv_i\,,\quad  i=1:n\,,
\label{eqlatvec}
\end{equation}
for the three \(p\times p\) matrices~\(D\), \(\Kp\) and~\(\Km\). 
\end{lemma}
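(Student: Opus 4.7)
The plan is to realize \cref{eq:mbd1Ddiff} directly as an $i$-indexed linear recursion on the ensemble vector $\vv_i:=(v_{i,0},\ldots,v_{i,p-1})^\top\in\RR^p$ and then match it term-by-term to \cref{eqlatvec}. The crucial feature of the embedding in \cref{figembedshifts} is that the coefficient of $v_{i+1,\ell+1}$ is $\kappa_{\ell+1/2}$, which depends only on~$\ell$, not on~$i$: a shift of $+1$ in the spatial index is tied to a cyclic shift of $+1$ in the ensemble index, and the diffusivities move homogeneously with that shift. Consequently the vector form will require only \emph{constant} $p\times p$ matrices, which is exactly what the lemma asserts.

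First I would introduce the cyclic shift permutation $S$ on~$\RR^p$ with $(S\vv)_\ell:=v_{\ell+1\bmod p}$, together with the diagonal matrices $K_+:=\diag(\kappa_{\ell+1/2}/d^2)$ and $K_-:=\diag(\kappa_{\ell-1/2}/d^2)$. The factor $1/d^2$ is absorbed into these matrices since \cref{eqlatvec} has a bare $\partial_t$ on the left. Setting $\Kp:=K_+S$ and $\Km:=K_-S^{-1}$ gives $(\Kp\vv_{i+1})_\ell=\kappa_{\ell+1/2}v_{i+1,\ell+1}/d^2$ and $(\Km\vv_{i-1})_\ell=\kappa_{\ell-1/2}v_{i-1,\ell-1}/d^2$, so that \cref{eq:mbd1Ddiff} becomes $\partial_t\vv_i=\Kp\vv_{i+1}+\Km\vv_{i-1}-M\vv_i$ with $M:=\diag\big((\kappa_{\ell+1/2}+\kappa_{\ell-1/2})/d^2\big)$, all three matrices being independent of~$i$ thanks to the ensemble construction.

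Next I would expand the claimed operator using the identities of \cref{tblopids}. Because $\Kp$ and $\Km$ act only on the ensemble index~$\ell$ while $E_i$ acts only on the spatial index~$i$, these operators commute, and hence $\delta_i(\Kp E_i^{1/2}-\Km E_i^{-1/2})=\Kp E_i-\Kp-\Km+\Km E_i^{-1}$. Applying this to $\vv_i$ and adding $D\vv_i$, the right-hand side of \cref{eqlatvec} becomes $\Kp\vv_{i+1}+\Km\vv_{i-1}+(D-\Kp-\Km)\vv_i$. Equating with the direct vector rewrite from the previous step forces the unique identification $D:=\Kp+\Km-M$, an explicit, $i$-independent $p\times p$ matrix, and the lemma follows.

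I do not anticipate any real obstacle: the result is essentially a notational reorganization repackaging the scalar ensemble system as a vector lattice system. The only care points are (i)~interpreting every $\ell$-subscript modulo~$p$ when building $S$ and $K_\pm$, so that $\Kp,\Km$ are genuinely constant, and (ii)~justifying the commutation $\Kp E_i=E_i\Kp$ by the fact that the two operators act on disjoint indices. The payoff, rather than the difficulty, is what matters here: the right-hand side of \cref{eqlatvec} is now translation invariant in~$i$, which is precisely the structure exploited by the Fourier analyses of \cref{sssech,sec:con}.
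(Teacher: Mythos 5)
Your proposal is correct and follows essentially the same route as the paper: both identify $\Kp$ and $\Km$ as diagonal diffusivity matrices composed with the cyclic shift in the ensemble index (the paper writes these as a single off-diagonal matrix $K$ with $K_{\ell,\ell+1}=\kappa_{\ell+1/2}/d^2$ and $\Km=K^{T}$), and both find that $D$ is the cross-period diffusion $\delta_\ell(\kappa_\ell\delta_\ell\cdot)/d^2$, which is exactly your $\Kp+\Km-M$. The only cosmetic difference is direction: the paper adds and subtracts $v_{i,\ell\pm1}$ to split the right-hand side into a $\delta_i$ part and a $\delta_\ell$ part, whereas you expand the claimed operator and solve for $D$ — the same computation.
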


\begin{proof} 
Let's rewrite the ensemble lattice \ode{}s~\cref{eq:mbd1Ddiff} for patch interior indices $i=1:n$\,:
\begin{align*}
d^2\partial_tv_{i,\ell}
&=\kappa_{\ell+1/2}(v_{i+1,\ell+1}-v_{i,\ell+1})
+\kappa_{\ell-1/2}(v_{i-1,\ell-1}-v_{i,\ell-1})
\\&\quad{}
+\kappa_{\ell+1/2}(v_{i,\ell+1}-v_{i,\ell})
+\kappa_{\ell-1/2}(v_{i,\ell-1}-v_{i,\ell})
\\&=\delta_i\big(\kappa_{\ell+1/2}v_{i+1/2,\ell+1}
-\kappa_{\ell-1/2}v_{i-1/2,\ell-1}\big)
+\delta_\ell(\kappa_\ell\delta_\ell v_{i,\ell})
\\&=\delta_i\big(\kappa_{\ell+1/2}E_i^{1/2}v_{i,\ell+1}
-\kappa_{\ell-1/2}E_i^{-1/2}v_{i,\ell-1}\big)
+\delta_\ell(\kappa_\ell\delta_\ell v_{i,\ell})
\,,
\end{align*}
on applying operator definitions given in \cref{tblopids}.
Define the cell-diffusion \(p\times p\) matrix~\(D\) to encode the operator~\(\delta_\ell(\kappa_\ell\delta_\ell\cdot)/d^2\), and the shifted-diffusivity \(p\times p\) matrix~\(K\) to be zero except \(K_{i,i+1}:=\kappa_{i+1/2}/d^2\) for $i=1:p-1$ and \(K_{p,1}:=\kappa_{1/2}/d^2\).
Then the above \ode{}s become the vector system~\eqref{eqlatvec}
with matrices \(\Kp:=K\) and \(\Km:=K^T\).
\end{proof}

Since \cref{lemlatvec} establishes that~\cref{eq:mbd1Ddiff} and~\eqref{eqlatvec} are the same,
we propose a patch scheme for the heterogeneous diffusion system~\cref{eq:mbd1Ddiff} of the form 
\begin{equation}
\text{solve \eqref{eqlatvec} for \(\vi\) instead of \(\vv_i\), with}
\quad
\vi[n+1]=E_x^r\vi[1]\,,\quad 
\vi[0]=E_x^{-r}\vi[n]\,,
\label{eqlatc}
\end{equation}
as the inter-patch coupling via the edge-values. 
In the inter-patch coupling of~\eqref{eqlatc}, since the shift~$E_x$ describes a macroscale shift of~$H$, the shift~$E^r_x$ describes a macroscale fractional shift of $rH=nd$\,; that is,  $E^r_x$~describes a shift over the width of a patch.
So $E^r_x\vi[1]$ shifts each element from the left-next-to-edge point to the corresponding element on the right edge~$\vi[n]$, and similarly, $E_x^{-r}\vi[n]$ shifts from the right-next-to-edge value to the left edge~$\vi[0]$.
The following \cref{thm:psc} justifies the patch scheme~\eqref{eqlatc}.

\begin{theorem} \label{thm:psc}
The macroscale of the patch scheme~\cref{eqlatc} 
is consistent with the microscale dynamics of~\cref{eqlatvec} over the entire spatial domain, and hence with the ensemble~\eqref{eq:mbd1Ddiff}.
\end{theorem}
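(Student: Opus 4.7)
The plan is to reduce the patch scheme~\eqref{eqlatc}, up to a controllable interpolation error in~\(H\), to the full lattice ensemble system~\eqref{eqlatvec}, and then invoke \cref{lemlatvec} to transfer consistency to the ensemble~\eqref{eq:mbd1Ddiff}. The pivot is the operator identity \(E_x^r = E_i^n\), which holds on any smooth extension of the field because \(E_x = e^{H\partial_x}\), \(E_i = e^{d\partial_x}\), and the patch width satisfies \(h = nd = rH\) by construction. Hence the macroscale fractional shift across a patch coincides exactly with the \(n\)-fold microscale shift across the same patch, and this is the mechanism by which the coupling reproduces the bulk dynamics.

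First I would make \(E_x^r\) concrete on the discrete patch-indexed data. For spectral coupling (\cref{sec:sasc}), the Fourier synthesis, multiplication by \(e^{\i kh}\), and inverse transform implements \(e^{rH\partial_x}\) exactly on every resolved Fourier mode, so the identity \(E_x^r = E_i^n\) is exact up to Nyquist. For the Lagrangian coupling of order~\(\Gamma\) (\cref{sec:fwsc}), I would expand \(E_x^r = (1+\mu\delta+\tfrac12\delta^2)^r\) using \(\Delta\partial_x = 2\operatorname{asinh}(\delta/2)\) from \cref{tblopids}, and match term by term with the Taylor series of \(E_i^n = e^{nd\partial_x}\), verifying that the stencil~\eqref{eqs:ccboth} reproduces the fractional macroscale shift to high order in~\(H\), with a remainder whose order grows with~\(\Gamma\).

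Next I would substitute this realisation into the edge conditions of~\eqref{eqlatc}, so that they become \(\vi[n+1] = E_i^n \vi[1]\) and \(\vi[0] = E_i^{-n} \vi[n]\) up to interpolation error. These are precisely the values that an unrestricted solution of the lattice system~\eqref{eqlatvec} would take at the sites one microscale step beyond each patch, so the interior \ode{}s of~\eqref{eqlatc} together with these edge conditions coincide with the restriction of~\eqref{eqlatvec} to the patch sub-lattice. Since \cref{lemlatvec} equates~\eqref{eqlatvec} with the ensemble~\eqref{eq:mbd1Ddiff}, the patch scheme inherits the homogenised macroscale dynamics established in \cref{sssech}, with the order of consistency in~\(H\) set by~\(\Gamma\) (or spectrally high for spectral coupling), which delivers the theorem.

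The hard part is making the identity \(E_x^r = E_i^n\) rigorous in the sparse patch geometry: the operator \(E_x\) acts on the macroscale patch index~\(I\), while \(E_i\) acts on the microscale interior index~\(i\), and the sequence of next-to-edge values has no canonical smooth extension. I would resolve this by working with formal power series in \(H\partial_x\); the Lagrangian expansion~\eqref{eqs:ccboth} manipulates \(\mu\) and \(\delta\) on the macroscale index, whereas \(E_i^n\) expands as a power series in \(d\partial_x\) on the microscale index, and equating the action of both on an arbitrary smooth test function of~\(x\) forces the required coefficient match up to the truncation order. A standard residual estimate then transfers the matching from smooth extensions back to the discrete lattice, and the spectral case follows by Fourier uniqueness.
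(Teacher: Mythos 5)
Your route is genuinely different from the paper's, and the difference matters. The paper does not argue by restricting full-lattice solutions to the patches; it derives a \emph{closed} macroscale evolution equation for the mid-patch values $\Vv^I:=\vv^I_{n/2+1/2}$ of the patch scheme itself. The trick is the operator identity $\delta_i=f(\bar\delta)$ for $f(\bar\delta)=2\sinh\big[\tfrac1n\asinh(\bar\delta/2)\big]$ and $\bar\delta=E_i^{n/2}-E_i^{-n/2}$: inverting $f$ and evaluating the governing equation at the patch mid-point expresses $f^{-1}(\partial_t-D)\Vv^I$ exactly in terms of the four values $\vv^I_0,\vv^I_1,\vv^I_n,\vv^I_{n+1}$---precisely the values tied together by the coupling conditions. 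Substituting $\vv^I_{n+1}=E_x^r\vv^I_1$ and $\vv^I_0=E_x^{-r}\vv^I_n$, identifying $E_x^{\pm r/2}$ with $E_i^{\pm n/2}$ on the smooth macroscale interpolant, and then reverting $f$ shows that $\Vv^I$ obeys \emph{precisely} the full-lattice operator of~\cref{eqlatvec}, with no residual at all when $E_x^r$ is realised exactly. Your shared ingredient, the identification $E_x^{r}=E_i^{n}$ on smooth fields, is the same; what the paper's operator algebra buys is that the conclusion is a statement about the patch scheme's own emergent dynamics rather than about how well the scheme is satisfied by something else.

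That points to the gap in your final step. Showing that restrictions of smooth solutions of~\cref{eqlatvec} satisfy the patch scheme up to a high-order residual is the classical truncation-error notion of consistency, but the inference that ``the patch scheme inherits the homogenised macroscale dynamics'' does not follow from it alone: the patch scheme is a different dynamical system on a much smaller domain (the paper flags exactly this, warning that the theorem ``may appear almost vacuous'' and that the proof must be deeper than a restriction argument), and a small residual only controls the patch scheme's own slow modes after you add a stability or spectral argument---for instance, using the self-adjointness from \cref{lemsasp,lemsafw} to convert an approximate eigenvector with small defect into a genuine nearby eigenvalue. Moreover, the interpolation defect in the edge values enters the $i=1$ and $i=n$ equations multiplied by $\kappa/d^2\propto H^{-2}$, so an $O(H^{2\Gamma+2})$ edge error must be tracked through this amplification to recover the observed $O(H^{2\Gamma})$ rate. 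Either supply that second half of the argument, or adopt the paper's direct closure, which renders it unnecessary.
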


This theorem may appear almost vacuous as the \ode{}s~\cref{eqlatvec} are common to both parts of the claim.
However, the distinction is that the ``patch scheme~\cref{eqlatc}''  has the \ode{}s~\cref{eqlatvec} holding only inside small, well-separated, patches of the spatial domain, whereas the ``dynamics of~\cref{eqlatvec}'' are to hold on a lattice over the entire spatial domain.
With very different domains, they are two very different dynamical systems, and so the following proof is deeper than may be first appear necessary.

{\def\proofname{Proof of \cref{thm:psc}}
\begin{proof}
Consider the vector system~\cref{eqlatvec} on patches coupled by the interpolation~\cref{eqlatc}.
Using \cref{tblopids}, the system
\begin{align*}&
\partial_t\vi=\delta_i\big(\Kp E^{1/2}_i-\Km E^{-1/2}_i\big)\vi+D\vi
\\&
\iff \partial_t\vi-D\vi
=2\sinh[d\partial_x/2]\big(\Kp E^{1/2}_i-\Km E^{-1/2}_i\big)\vi
\\&
\iff (\partial_t-D)\vi
=2\sinh\big[\tfrac1n\asinh(\bar\delta/2)\big]\big(\Kp E^{1/2}_i-\Km E^{-1/2}_i\big)\vi
\end{align*}
for centred difference \(\bar\delta:=E_i^{n/2}-E_i^{-n/2}=2\sinh(nd\partial_x/2)\), as the difference is over a distance \(\Delta=h=nd\)\,.
Invoking cognate steps to those of the proof by \cite{Roberts2011a}, let's invert the operator function \(f(\bar\delta):=2\sinh\big[\tfrac1n\asinh(\bar\delta/2)\big]\) and write the above dynamical equation as
\begin{align*}
&f^{-1}(\partial_t-D)\vi
\\&
=\bar\delta\big(\Kp E^{1/2}_i-\Km E^{-1/2}_i\big)\vi
\\&
=E_i^{n/2}\big(\Kp E^{1/2}_i-\Km E^{-1/2}_i\big)\vi
-E_i^{-n/2}\big(\Kp E^{1/2}_i-\Km E^{-1/2}_i\big)\vi
\\&
=\Kp\vi[i+n/2+1/2]-\Km\vi[i+n/2-1/2]
-\Kp\vi[i-n/2+1/2]+\Km\vi[i-n/2-1/2]\,.
\end{align*}
Now evaluate this equation at the patch mid-point, \(i=n/2+1/2\) (a virtual mid-point when size~\(n\) is even), and here define the macroscale \(\Vv^I(t):=\vi[n/2+1/2]\in\RR^p\).
Hence
\begin{align*}
&f^{-1}(\partial_t-D)\Vv^I
\\&
=\Kp\vi[n+1]-\Km\vi[n]
-\Kp\vi[1]+\Km\vi[0]
\\&
=\Kp E_x^r\vi[1]-\Km\vi[n]-\Kp\vi[1]+\Km E_x^{-r}\vi[n]
\quad(\text{by edge interpolation})
\\&=\Kp(E_x^r-1)\vi[1]-\Km(1-E_x^{-r})\vi[n]
\\&=(E_x^{r/2}-E_x^{-r/2})(\Kp E_x^{r/2}\vi[1]-\Km E_x^{-r/2}\vi[n])
\quad(\text{by commutativity}).
\end{align*}
The above right-hand side is on the macroscale because it involves the macroscale inter-patch interpolation via the operator~\(E_x\). 
To compare with the original system, we transform this macroscale identity back to its equivalent on the microscale lattice.
To do so, notionally evaluate over the microscale the smooth macroscale interpolation underlying the operators~\(E_x^{\pm r/2}\) so that for the interpolated field we have \(E_x^{\pm r/2}=E^{\pm n/2}_i\).
Hence, for the smooth macroscale field from the patch scheme, the above identity becomes
\begin{align*}&
f^{-1}(\partial_t-D)\Vv^I
\nonumber\\&
=(E_i^{n/2}-E_i^{-n/2})(\Kp E_i^{n/2}\vi[1]-\Km E_i^{-n/2}\vi[n])
\nonumber\\&
=\bar\delta(\Kp\vi[n/2+1]-\Km\vi[n/2])
=\bar\delta(\Kp E_i^{1/2}-\Km E_i^{-1/2})\vi[n/2+1/2]
\nonumber\\&
=\bar\delta(\Kp E_i^{1/2}-\Km E_i^{-1/2})\Vv^I\,.
\end{align*}
Now revert the function~\(f^{-1}\), recalling \(f(\bar\delta)=\delta_i\)\,, to deduce that
\begin{align}&
(\partial_t-D)\Vv^I=f(\bar\delta)\big[(\Kp E_i^{1/2}-\Km E_i^{-1/2})\Vv^I\big]
\nonumber\\&\iff
\partial_t\Vv^I=\delta_i
\big[(\Kp E_i^{1/2}-\Km E_i^{-1/2})\Vv^I\big]+D\Vv^I.
\label{eq:macropatchemb}
\end{align}
The operator on the right-hand side is precisely the same as that for the microscale. 
Thus, in this patch scheme, the evolution over the macroscale of the mid-patch values~\(\Vv^I\) are consistent with the microscale evolution~\cref{eqlatvec}.
\end{proof}}

Consequently, any errors in the macroscale of this patch scheme applied to~\cref{eq:mbd1Ddiff} arise only from errors in the interpolation of the edge values (and the usual round-off errors).

\begin{corollary}\label{corlatvec}
The patch scheme~\eqref{eqlatc} (or \cref{sec:ens}) applied to the ensemble~\eqref{eq:mbd1Ddiff} of the heterogeneous diffusion~\eqref{eq:full1Ddiff}, is  consistent with the homogenisation~\eqref{eq:slowman} of the heterogeneous diffusion~\eqref{eq:full1Ddiff}. 
\end{corollary}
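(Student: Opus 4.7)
The proof is essentially a transitivity argument chaining together two results already established in the excerpt: \cref{thm:psc}, which shows that the patch scheme is consistent (to arbitrarily high order in~\(H\)) with the full ensemble lattice system~\eqref{eqlatvec}, equivalently~\eqref{eq:mbd1Ddiff}; and the analysis of \cref{sssech}, which shows that this ensemble system is itself consistent with the homogenised \pde~\eqref{eq:slowman} of the original heterogeneous diffusion~\eqref{eq:full1Ddiff}. The plan is simply to chain these two implications and verify that the notion of ``consistency to arbitrarily high order in~\(H\)'' is preserved under the chaining.

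First, I would invoke the conclusion~\eqref{eq:macropatchemb} of \cref{thm:psc}: the mid-patch vector \(\Vv^I(t)\) obeys an operator equation \emph{formally identical} to the microscale system~\eqref{eqlatvec}, with the microscale shift operators \(E_i^{\pm 1/2}\) reinterpreted as acting on the smooth macroscale interpolant underlying the discrete macro-grid values~\(\Vv^I\), through the correspondence \(E_x^{\pm r/2}\leftrightarrow E_i^{\pm n/2}\) introduced in the proof of \cref{thm:psc}. No approximation enters at this step beyond the accuracy of the patch-edge interpolation itself---exact for spectral coupling (\cref{sec:sasc}), and consistent to order~\(H^{2\Gamma}\) for Lagrangian coupling of order~\(\Gamma\) (\cref{sec:fwsc}, as illustrated by \cref{fig:homoEigen1}).

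Second, I would apply the spatial Fourier transform and centre-manifold construction of \cref{sssech} \emph{verbatim} to this macroscale image. Because the governing operator acting on~\(\Vv^I\) is precisely the operator analysed there, the same computation produces the same emergent slow manifold~\eqref{eq:echft}, and integrating over small wavenumbers yields the same homogenised \pde~\eqref{eq:slowman}, now governing the long-wavelength content of the patch-scheme output. Combined with \cref{thm:psc}, this establishes that the patch-scheme macroscale tracks~\eqref{eq:slowman} to the same asymptotic order in~\(H\) (equivalently, for fixed ratio~\(r\), to the same asymptotic order in~\(d\)) as the full ensemble does, which is the claim of the corollary.

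The subtlety---and the only place the argument requires genuine care---lies in the transition between the discrete macro-grid values~\(\Vv^I\) and the smooth macroscale field on which the Fourier and centre-manifold machinery of \cref{sssech} operates. To close the chain rigorously one must verify that the interpolation scheme used to set patch edge values is the same lift that converts~\(\Vv^I\) to a smooth macroscale field, so that the asymptotic series in~\(k\) inherited from \cref{sssech} genuinely controls the discrepancy between the patch-scheme output and solutions of~\eqref{eq:slowman}. For spectral coupling on a periodic macroscale this lift is exact and the chain is immediate; for finite-order Lagrangian coupling the residual error is controlled by the exponential-in-\(\Gamma\) bound already established in \cref{fig:homoEigen1}, so consistency with~\eqref{eq:slowman} holds to any prescribed order in~\(H\) by choosing~\(\Gamma\) sufficiently large.
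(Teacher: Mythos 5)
Your proposal is correct and follows essentially the same route as the paper: a transitivity argument chaining \cref{thm:psc} (patch scheme consistent with the ensemble~\eqref{eq:mbd1Ddiff}) with the centre-manifold homogenisation of \cref{sssech} (ensemble consistent with~\eqref{eq:slowman}). The paper's own proof is terser---it simply observes that~\eqref{eq:slowman} is a low-pass-filtered superposition of exact ensemble solutions, so consistency with the ensemble immediately gives consistency with its homogenisation---whereas you re-run the slow-manifold construction on the macroscale image and discuss the discrete-to-smooth lift explicitly, but this is an elaboration of the same argument rather than a different one.
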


\begin{proof}
Recall that~\eqref{eq:mbd1Ddiff} is an ensemble of \(p\)~uncoupled, phase-shifted, copies of the heterogeneous diffusion~\eqref{eq:full1Ddiff}.
So the results for~\eqref{eq:mbd1Ddiff} apply to~\eqref{eq:full1Ddiff}.
Also, the centre manifold theory supported homogenisation~\eqref{eq:slowman} is a `\pde' of a superposition, a linear combination, a low-pass filter, of exact solutions of the ensemble~\eqref{eq:mbd1Ddiff}, and hence of~\eqref{eq:full1Ddiff}. 
\cref{thm:psc} proves the patch scheme~\eqref{eqlatc} is consistent with the ensemble~\eqref{eq:mbd1Ddiff}, and so it is consistent with the homogenisation~\eqref{eq:slowman} of the heterogeneous diffusion~\eqref{eq:full1Ddiff}.
\end{proof}

\begin{corollary}\label{corlatmul}
If the chosen size~\(n\) of the patches is a multiple of the microscale periodicity~\(p\), then the patch scheme applied directly to the heterogeneous diffusion~\cref{eq:full1Ddiff} (without the ensemble of phase shifts) is consistent with the homogenisation of~\cref{eq:full1Ddiff}. 
\end{corollary}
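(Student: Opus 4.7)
The plan is to reduce the direct patch scheme, in the special case $n=mp$, to a decoupled single member of the ensemble patch scheme treated by \cref{corlatvec}. The key observation is that when the patch size~$n$ is a multiple of the microscale period~$p$, the phase of the heterogeneity seen at the right edge of any patch matches the phase at the left edge of the neighbouring patch, so the self-adjoint constraint $\kappa^I_{1/2}=\kappa^J_{n+1/2}$ identified in \cref{sec:sa1D} holds automatically for every pair of patches~$I,J$ aligned on the periodic microscale lattice.

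First I would verify that, for $n=mp$, the \(p\times p\) matrix~$K$ appearing in the ensemble coupling~\eqref{eq:enscoup} is diagonal. By construction $K$ is nonzero only at the entries $K_{\ell+1,(\ell+n\bmod p)+1}=\kappa_{\ell+1/2}$; since $n\bmod p=0$, these entries are $K_{\ell+1,\ell+1}=\kappa_{\ell+1/2}$, so $K$ is diagonal and $K^\dag=K$. Consequently the block-coupling matrices $\cC^{IJ}_{1n}=K\,\cI^{IJ}_{1n}$ and $\cC^{IJ}_{n1}=K\,\cI^{IJ}_{n1}$ act independently on each of the $p$~coordinates of the ensemble vector. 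Combined with the observation that the dynamics matrix~\cD\ on the ensemble is also block-diagonal in the phase index, this shows that the ensemble patch scheme decouples into $p$~independent copies.

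Next I would identify each decoupled copy with the direct patch scheme of \cref{sec:sa1D} applied to the heterogeneous diffusion~\eqref{eq:full1Ddiff}, possibly with a cyclic relabelling of the diffusivities corresponding to the fixed phase shift of that ensemble member. The edge-value interpolation in each decoupled copy reduces exactly to \eqref{eq:sacoup} with the scalar interpolation weights $\cI^{IJ}_{1n},\cI^{IJ}_{n1}$ supplied by either the spectral coupling of \cref{sec:sasc} or the Lagrangian coupling of \cref{sec:fwsc}. Since one such copy is literally the direct patch scheme applied to~\eqref{eq:full1Ddiff}, consistency of the ensemble scheme implies consistency of the direct scheme.

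Finally, invoking \cref{corlatvec} on the ensemble~\eqref{eq:mbd1Ddiff}, which by that corollary is consistent with the homogenised \pde~\eqref{eq:slowman} of~\eqref{eq:full1Ddiff}, transfers to each individual decoupled phase and in particular to the direct patch scheme. The main subtlety I expect is bookkeeping: one must check that the spatial placement of the patches relative to the microscale lattice is such that the same phase shift is encountered at every patch's left edge, i.e.\ that the patch macroscale spacing~$H$ is compatible with the period~$p$ on the underlying lattice. This is automatic if $H/d$ is an integer multiple of $p$ (equivalently $h=nd=mpd$ with $H$ a multiple of $pd$), which is implicit in the statement; any more general placement would require a careful realignment argument but does not affect the core consistency conclusion.
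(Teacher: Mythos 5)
Your proposal is correct and follows essentially the same route as the paper: both arguments show that when $p$ divides $n$ the inter-patch coupling never mixes ensemble members, so the ensemble patch scheme decouples into $p$ independent copies of the direct scheme, and consistency then follows from \cref{corlatvec}. The only difference is notational—you establish the decoupling via the diagonality of the matrix~$K$ in the `stacked' ensemble coupling~\eqref{eq:enscoup}, whereas the paper tracks the phase labels $\phi$ and $\phi\mp n\bmod p$ in the `diagonally wrapped' formulation of \cref{sec:theory}—which does not constitute a genuinely different argument.
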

\begin{proof} 
Recall that, among the ensemble of phase-shifted diffusivities, $v^I_{i,\ell}$~is the member with phase $\phi=(i-\ell \mod p)$.
Further recall that coupling is always between edge fields and next-to-edge fields with the same~$\ell$ value, so that $v^I_{0,\ell}$ is computed from interpolations of $v^J_{n,\ell}$, and $v^I_{n+1,\ell}$ is computed from interpolations of $v^J_{1,\ell}$, for some patches~$J$ neighbouring patch~$I$.
For most~\(n\) the inter-patch coupling~\eqref{eqlatc} couples patches via different phases.
For example, \cref{fig:1Dcoupling43,fig:embedn3} with $n=4$ and periodicity $p=3$ couples the right-edge field of phase $\phi=0$ ($i=5$\,, $\ell=2$) to the left-next-to-edge fields of $\phi=2$ ($i=1$\,, $\ell=2$) in neighbouring patches, and the left-edge field of phase $\phi=0$ ($i=0$\,, $\ell=0$) to the right-next-to-edge fields of $\phi=1$ ($i=4$\,, $\ell=0$) in neighbouring patches.
In general, the inter-patch coupling~\cref{eqlatc} for $\ell=0:p-1$ has phase~$\phi$ on the right-edge  of patch~\(I\) coupled with phase~\(\phi-n\mod p\) left-next-to-edge fields in neighbouring patches, and on the left-edge of patch~\(I\) coupled with phase~\(\phi+n\mod p\) in  right-next-to-edge fields neighbouring patches.
Consequently, in the cases when $p$~divides~$n$ the inter-patch coupling~\cref{eqlatc} always couples with the same phase from every patch.
In these cases the $p$~phases do not interact with each other. 
That is, when $p$~divides~$n$ the patch scheme of an ensemble decouples to $p$~independent equivalent patch systems, and so we need only compute one of these $p$~systems to obtain the benefits of the ensemble results.
Thus, \cref{corlatvec} assures us that when $p$~divides~$n$ the patch scheme applied to the heterogeneous diffusion~\cref{eq:full1Ddiff} is consistent with the homogenisation~\eqref{eq:slowman} of~\cref{eq:full1Ddiff}.
\end{proof}

\section{Self-adjoint preserving patch scheme for 2D}
\label{sec:pd2D}

Recall that \cref{fig:homoEG2} illustrates a patch scheme simulation that provides a computational homogenisation of the heterogeneous 2D diffusion~\cref{eq:full2Ddiff} on a microscale lattice.
We explore the patch scheme applied to~\cref{eq:full2Ddiff} as it is the canonical example of the homogenisation of heterogeneous \pde{}s in multiple spatial dimensions.
We first define how to coupling patches in 2D, confirm that the 2D patch scheme is self-adjoint, and then verify the accuracy when using spectral coupling (\cref{sec:sa2D}).
For 2D heterogeneous diffusion~\cref{eq:full2Ddiff}, patch coupling is constrained in the same way as the 1D heterogeneous diffusion patch coupling; namely that we require the patch size to be divisible by the period of the heterogeneous diffusion; but like the 1D case, an ensemble of phase-shifts relaxes this constraint (\cref{sec:ens2D}).
Finally, we prove that the resultant 2D patch scheme is consistent with the macroscale dynamics of the original microscale system (\cref{secpsc2D}).
We conjecture there is a straightforward extension of this patch scheme and theory from 2D to higher dimensions.

\subsection{Self-adjoint coupling for 2D}
\label{sec:sa2D}

Consider the 2D heterogeneous diffusion~\eqref{eq:full2Ddiff}. 
For this 2D~case, we create a patch scheme using similar parameters to those defined for the 1D case (e.g.,~$d$, $H$, $N$, $n$,~$r$), but now with subscripts $x$~and~$y$ to distinguish the parameters for the $x$~and~$y$ directions (\cref{fig:2Dcoupling}).
The patch scheme divides the 2D~domain of this microscale model into well-separated patches on a macroscale lattice indexed by~$(I,J)$ with macroscale lattice spacing $H_x$~and~$H_y$, as illustrated by \cref{fig:homoEG2,fig:2Dcoupling}.
In the patch scheme, the microscale indices~$(i,j)$ index interior points within a patch by $i=1:n_x$ and $j=1:n_y$\,. 
On all such interior points we apply the 2D heterogeneous diffusion~\eqref{eq:full2Ddiff} but with the patch index specified:
\begin{subequations}\label{eqs:2Ddiffpatch}%
\begin{align}
\partial_t u^{I,J}_{i,j}&=\big[\kappa^{I,J}_{i+\frac12,j}(u^{I,J}_{i+1,j}-u^{I,J}_{i,j})
+\kappa^{I,J}_{i-\frac12,j}(u^{I,J}_{i-1,j}-u^{I,J}_{i,j})
\big]/d_x^2
\nonumber\\&\quad{}
+\big[\kappa^{I,J}_{i,j+\frac12}(u^{I,J}_{i,j+1}-u^{I,J}_{i,j})
+\kappa^{I,J}_{i,j-\frac12}(u^{I,J}_{i,j-1}-u^{I,J}_{i,j})
\big]/d_y^2\,,
\label{eq:2Ddiffpatch}
\end{align}
where the diffusivities have lattice periods~$p_x$ and~$p_y$ in the $x$~and~$y$ directions, respectively.

\begin{figure}
\centering
\caption{\label{fig:2Dcoupling}self-adjoint preserving,  nearest neighbour ($\Gamma=1$), coupling of patches of size \(5\times5\)\,:
(left)~vertical~$y$  coupling; and (right)~horizontal~$x$ coupling.
Filled {\color{blue} triangles}\slash {\color{red} circles} are the next-to-edge points whose values are interpolated to the edge points at unfilled {\color{blue} triangles}\slash {\color{red} circles}.
Interpolation to a {\color{blue} bottom}\slash {\color{red} top} edge is from the next-to-edge points with the same horizontal position on the {\color{blue} top}\slash {\color{red} bottom} of the near patches above and below, as indicated by arrows in the left diagram.
Similarly for {\color{blue} left}\slash {\color{red} right} interpolation.
}
\includegraphics{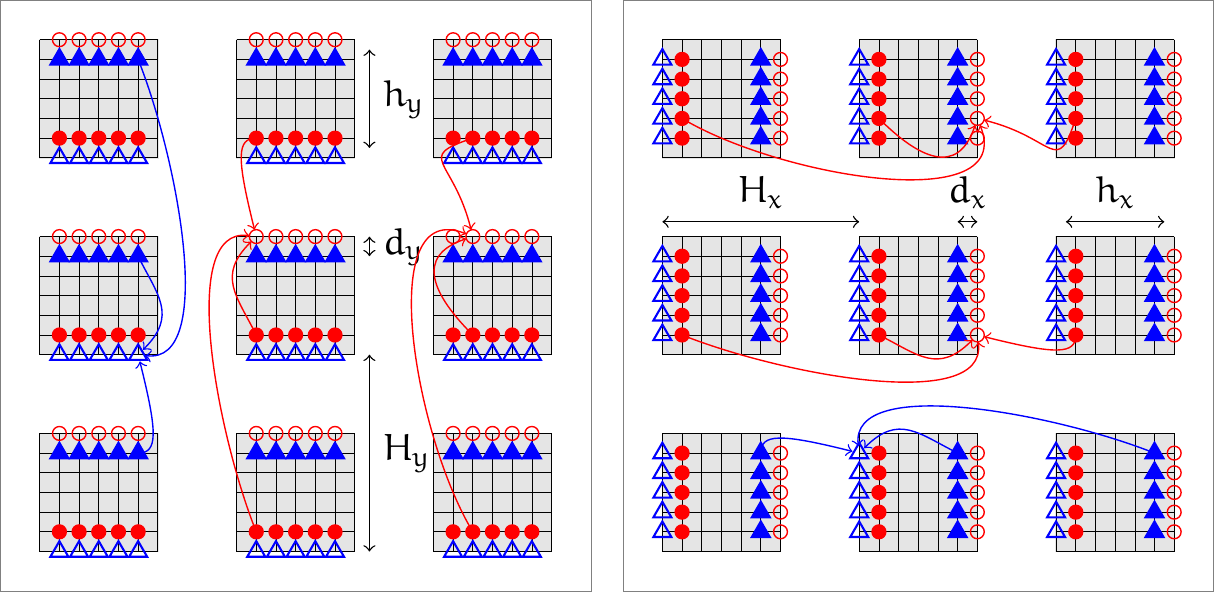}
\end{figure}

The diffusion equation~\eqref{eq:2Ddiffpatch} requires field values at the edges of every patch, namely $u^{I,J}_{0,j}$, $u^{I,J}_{n_x+1,j}$, $u^{I,J}_{i,0}$ and $u^{I,J}_{i,n_y+1}$ for $i=1:n_x$ and $j=1:n_y$\,.
The inter-patch coupling specifies these edge values. 
\cref{fig:2Dcoupling} illustrates the 2D~patch coupling across both~$x$ (to obtain left and right patch-edge values) and~$y$ (to obtain bottom and top patch-edge values).
We implement the following 2D~form of the 1D coupling~\cref{eq:sacoup} in the \(x\)~and~\(y\) directions, respectively:
\begin{align}&
u^{I,J}_{0,j}=\sum_K\cI^{IK}_{1n_x}u^{K,J}_{n_x,j}\quad \text{and} \quad u^{I,J}_{n_y+1,j}=\sum_K\cI^{IK}_{n_x1}u^{K,J}_{1,j}\,,\quad  j=1:n_y\,;
\label{eq:2Dcouplex}\\&
u^{I,J}_{i,0}=\sum_K\cJ^{JK}_{1n_y}u^{I,K}_{i,n_y}\quad \text{and} \quad u^{I,J}_{i,n_y+1}=\sum_K\cJ^{JK}_{n_y1}u^{I,K}_{i,1}\,,\quad i=1:n_x\,.
\label{eq:2Dcoupley}
\end{align}
\end{subequations}
The matrices $\cI$~and~$\cJ$ for $x$~and~$y$ coupling, respectively, are the diffusivity independent matrices of interpolation coefficients and are equivalent to~$\cI$ for 1D~systems defined in \cref{sec:sa1D}, but  depend on different size ratios $r_x=h_x/H_x$ and $r_y=h_y/H_y$\,, respectively.
As in 1D, we choose these coefficients to implement either spectral coupling (\cref{sec:sasc}), or Lagrangian coupling (\cref{sec:fwsc}).

\cref{fig:homoEG2} illustrates a patch simulation of the heterogeneous diffusion~\eqref{eqs:2Ddiffpatch} for a Gaussian initial condition on a macroscale lattice of $5\times 5$ patches.
The patches are of size $n_x=n_y=3$ in both directions.
The \(18\)~microscale diffusion coefficients~$\kappa_{i,j}$ have a period of three in both directions and are independently and identically distributed log-normally (proportional to~\(\exp[2\cN(0,1)]\)). 
The patches are coupled via two sets of 1D~spectral interpolation (\cref{sec:sasc}).
From the initially smooth Gaussian, the patch system evolves to a `rough' sub-patch structure that reflects the microscale heterogeneity.
Thereafter, the patch system simulates how the field~\(u\) diffuses across the domain according to an effective macroscale anisotropic homogenisation.

\cref{fig:2Dsin} simulates a similar heterogeneous diffusion but with rectangular patches so that there are geometric differences between the two principal directions.  
Here the microscale diffusivities are (to two decimal places)
{\small
\begin{align}\label{eq:chetr}
\begin{bmatrix} \kappa_{i+\frac12,j} \end{bmatrix}
=\begin{bmatrix}
18.91  &  1.06  &  0.63 &   2.11\\
    4.46  &  0.72  &  1.02  &  1.66\\
    4.89  &  0.88  &  1.31  &  5.79\\
    1.62  &  2.68  &  2.32  &  1.24\\
    0.42  &  0.88 &   0.59  &  1.35
    \end{bmatrix},
&&
\begin{bmatrix} \kappa_{i,j+\frac12} \end{bmatrix}
=\begin{bmatrix}
    0.48  &  0.63  &  1.31  &  0.51\\
    0.39  & 10.38  &  3.07 &   0.37\\
    2.10  &  1.74  &  2.68  &  1.63\\
    1.20  &  4.38  &  0.50  &  1.02\\
    2.55  &  1.23  &  0.33  &  1.06
    \end{bmatrix},
\end{align}
}%
for microscale periodicities \(p_x=n_x=5\) and \(p_y=n_y=4\), with patch width ratios \(r_x=0.5\) and \(r_y=0.4\)\,.
The qualitative properties of the patch simulation are like those of \cref{fig:homoEG2} albeit here starting from a noisy initial condition.
Such simulations as these are readily performed using our \textsc{Matlab}\slash Octave Toolbox \cite[]{Maclean2020a}.

\begin{figure}
\caption{\label{fig:2Dsin} Simulation of 2D heterogeneous diffusion~\cref{eqs:2Ddiffpatch} with diffusivities~\cref{eq:chetr}, spectral coupling, and a noisy initial condition.
}
\pgfplotsset{label shift=-2ex}
\begin{tabular}{@{\hspace{-2mm}}c@{}c@{}}
\includegraphics{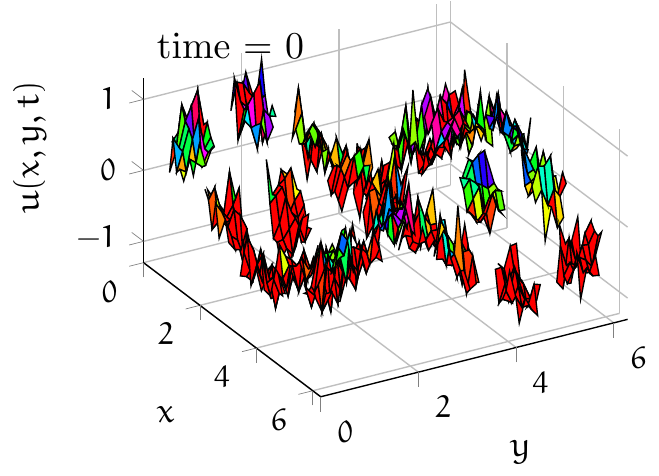}&
\includegraphics{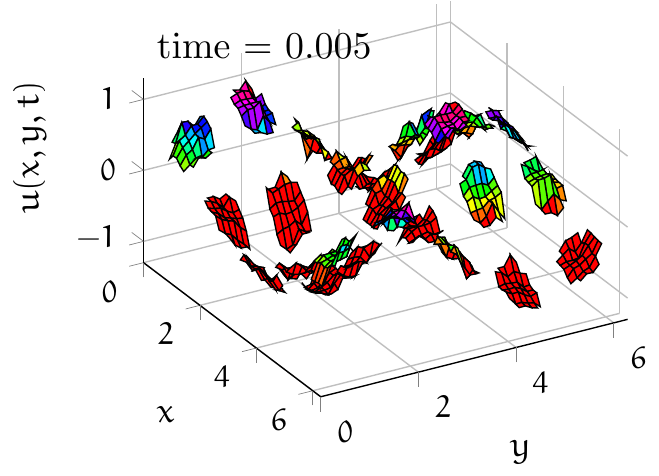}\\
\includegraphics{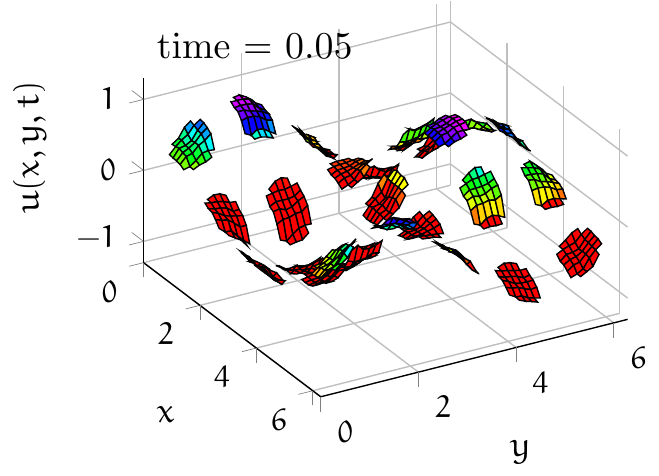}&
\includegraphics{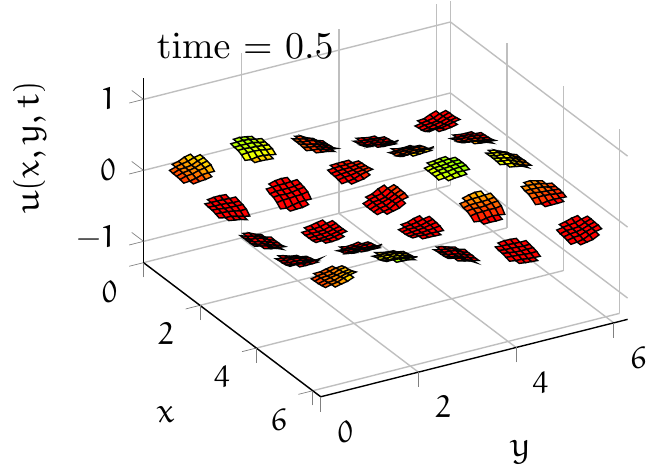}
\end{tabular}
\end{figure}

\begin{corollary}[2D self adjoint]\label{cor2Dsa}
The 2D patch scheme~\eqref{eqs:2Ddiffpatch} preserves the self-adjoint symmetry of the 2D heterogeneous diffusion~\eqref{eq:full2Ddiff} when the patches are coupled by spectral (\cref{sec:sasc}) or Lagrangian (\cref{sec:fwsc}) interpolation.
\end{corollary}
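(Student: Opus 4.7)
The plan is to reduce \cref{cor2Dsa} to the 1D results already in hand by exploiting the tensor-like separation of the 2D coupling into independent $x$- and $y$-couplings. Decompose the patch operator as \(\cL=\cD+\cC_x+\cC_y\)\,, where \(\cD\) is block-diagonal and encodes the unmodified sub-patch \ode{}s~\eqref{eq:2Ddiffpatch} inside every patch \((I,J)\), while \(\cC_x\) and \(\cC_y\) are the coupling operators that supply the patch-edge values via \eqref{eq:2Dcouplex} and \eqref{eq:2Dcoupley}, respectively. Since the sum of self-adjoint operators is self-adjoint, it suffices to prove each summand is Hermitian.

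For \(\cD\), the 2D microscale stencil in~\eqref{eq:2Ddiffpatch} is the sum of two nearest-neighbour second-difference operators, one in~\(i\) and one in~\(j\), each with the real symmetric tridiagonal structure of~\eqref{eq:D}. Hence every block \(\cD^{I,J}\) is real symmetric by direct computation, and \(\cD\) is self-adjoint as it inherits the self-adjointness of the microscale model. The essence of the 2D argument therefore shifts to the coupling matrices.

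For \(\cC_x\), I would fix \(J\) and \(j\) and observe that \eqref{eq:2Dcouplex} is literally the 1D coupling~\eqref{eq:sacoup} acting on the horizontal slice \(\{u^{I,J}_{i,j}\}_{i,I}\); the coefficient matrix \(\cI^{IK}\) is the same matrix of interpolation weights considered in \cref{sec:sasc,sec:fwsc}. Thus \cref{lemsasp,lemsafw} yield \(\cI^{IK}_{1n_x}=\cI^{KI}_{n_x1}\) for both the spectral and Lagrangian schemes. Combined with the 1D-style diffusivity-compatibility condition \(\kappa^{I,J}_{1/2,j}=\kappa^{K,J}_{n_x+1/2,j}\) for every \(j\) and every coupled pair \((I,K)\)\,, the resulting contribution to \(\cC_x\) is symmetric block-by-block across the \(J,j\) slices, so \(\cC_x=\cC_x^\dag\). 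The argument for \(\cC_y\) is identical after swapping the roles of the two directions, with the matching condition \(\kappa^{I,J}_{i,1/2}=\kappa^{I,K}_{i,n_y+1/2}\).

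The main obstacle is that, precisely as in 1D, the diffusivity matching conditions required to make the coupling blocks Hermitian force the patch sizes to respect the microscale periodicities, namely \(p_x\mid n_x\) and \(p_y\mid n_y\)\,. With these alignment provisos the argument above goes through cleanly; without them \(\cC_x\) and \(\cC_y\) fail to pair equal diffusivities across patch edges and symmetry is lost. I would flag this at the end of the proof and defer its removal to \cref{sec:ens2D}, where embedding into an ensemble of phase-shifts restores self-adjointness for arbitrary \(n_x,n_y\) by exactly the same mechanism as in \cref{sec:ens}, namely replacing the scalar interpolation entries by suitably chosen \(p_x\times p_x\) and \(p_y\times p_y\) blocks so that compatible diffusivities are always paired.
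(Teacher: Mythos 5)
Your proof is correct, and it reaches the same destination as the paper's --- reduction to the 1D \cref{lemsasp,lemsafw} by exploiting that the 2D coupling separates into independent $x$- and $y$-couplings --- but the packaging is genuinely different. The paper nests the two directions: it bundles all $y$-values at fixed $x$ into vectors in \(\HH=\RR^{n_yN_y}\), so that the $x$-diffusion with $x$-coupling becomes a single Hilbert-space-valued 1D patch system (invoking \cref{remHilb} to apply the 1D lemmas with diagonal-matrix diffusivities \(\cK^I_{i\pm1/2}\)), while the $y$-interactions plus $y$-coupling form a block-diagonal operator \(\diag(\cL^I_i)\) that is self-adjoint by the same lemmas applied column-by-column; the result is a sum of two self-adjoint operators. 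You instead use the flat three-term split \(\cL=\cD+\cC_x+\cC_y\), keeping the whole 2D interior stencil in \(\cD\) and observing that \(\cC_x\) (resp.\ \(\cC_y\)) is block-diagonal over the transverse indices \((J,j)\) (resp.\ \((I,i)\)) with each block literally the 1D coupling matrix. Your route is more elementary --- it needs no operator-valued generalisation of the 1D lemmas, only their scalar form applied slice-by-slice --- and it has the merit of making explicit the diffusivity-matching proviso \(p_x\mid n_x\), \(p_y\mid n_y\), which the corollary statement leaves implicit (the paper states it earlier in \cref{sec:pd2D}). The paper's nested formulation buys a cleaner induction to higher dimensions and mirrors the two-step structure later reused in the consistency proof of \cref{thm:psc2}. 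Both arguments are sound.
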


The following proof should generalise by induction to also cover corresponding patch schemes in multi-D space.

\begin{proof}
Let the space \(\HH:=\RR^{n_yN_y}\), and form all field values at fixed~\(x\), and all~\(y\), into the vectors \(v^I_i:=\big( u^{I,J}_{i,j}\C j=1:n_y\C J=1:N_y\big)\in\HH\)\,.
Correspondingly form the diffusivity matrices that operate in the \(x\)-direction as \(\cK^I_{i\pm1/2}:=\diag\big(\kappa^{I,J}_{i\pm1/2,j}\C j=1:n_y\C J=1:N_y\big)\in\HH\times\HH\)\,. 
Then the 2D heterogeneous diffusion~\eqref{eq:2Ddiffpatch} becomes
\begin{equation}
\partial_tv^I_i=\big[\cK^I_{i+\frac12}(v^I_{i+1}-v^I_i)
+\cK^I_{i-\frac12}(v^I_{i-1}-v^I_i)
\big]/d_x^2+\cL^I_iv^I_i\,,
\label{eq2Dsa}
\end{equation}
where \(\cL^I_i\in\HH\times\HH\) is the matrix of \(y\)-direction  interactions and coupling~\eqref{eq:2Dcoupley} at fixed~\(x\).
By \cref{lemsasp,lemsafw}, the~\(\cL^I_i\) are self-adjoint, and so the system-wide \(\diag\big(\cL^{I}_{i}\C i=1:n_x\C I=1:N_x\big)\) is self-adjoint.
The term \([\cdots]/d_x^2\) in~\eqref{eq2Dsa},  coupled by~\eqref{eq:2Dcouplex}, is in a generalised form of the 1D diffusion~\eqref{eq:1Ddiff}, and so, by \cref{remHilb,lemsasp,lemsafw}, it also is self-adjoint.
Thus \eqref{eq2Dsa} coupled by~\eqref{eq:2Dcouplex} is self-adjoint, and hence so is the patch scheme~\eqref{eqs:2Ddiffpatch}.
\end{proof}

\paragraph{Numerics verify accuracy of the scheme}
To verify the accuracy of the patch scheme we compare the full-space lattice dynamics of heterogeneous diffusion~\cref{eq:full2Ddiff} with the two cases of the corresponding patch scheme~\cref{eqs:2Ddiffpatch}, for two size ratios, \(r_x,r_y=0.5\) and~\(0.25\).
The underlying microscale lattice system is kept the same across these three cases (the same~\(d_x,d_y\) and diffusivities).
We tried dozens of cases where the microscale periodicities divide the patch sizes, and all verified that the 2D~patch scheme with spectral interpolation, to computer round-off error, \begin{itemize}
\item preserved the self-adjoint symmetry, and 
\item correctly predicted the (small magnitude) macroscale eigenvalues.
\end{itemize}
That is, the spectral patch scheme appears to be an accurate computational homogenisation (as proved subsequently by \cref{corslat2c}).

\begin{figure}
\caption{\label{fig:2Derr} Relative errors of macroscale eigenvalues from Lagrangian coupling compared to spectral coupling for 2D heterogeneous diffusion~\cref{eqs:2Ddiffpatch} with diffusivities~\cref{eq:chetr} of periodicity $p_x=5=n_x$\,, $p_y=4=n_y$\,.
Other patch parameters are  $N_x=10$, $N_y=11$, $r_x=0.5$\,, $r_y=0.4$\,.
The right-hand column lists the eigenvalues from spectral coupling.
}
\centering
\includegraphics{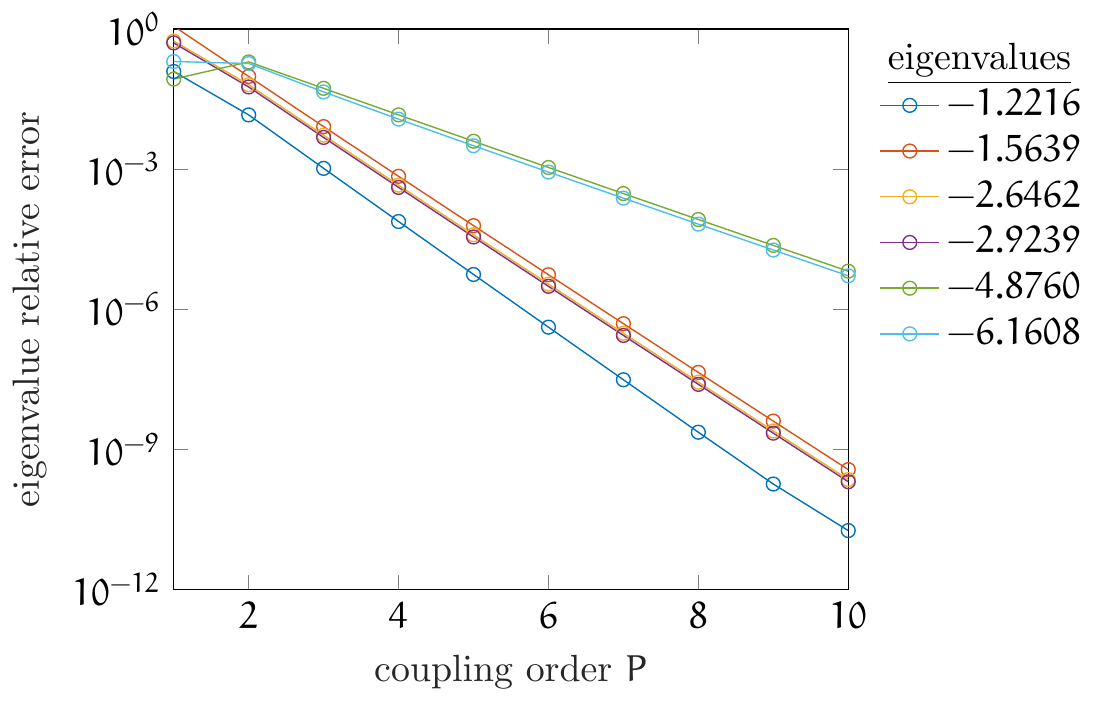}
\end{figure}

As an example of Lagrangian coupling, let's continue considering the diffusivities of~\eqref{eq:chetr} with the same parameters.
\cref{fig:2Derr} plots relative errors of the macroscale eigenvalues of the 2D heterogeneous diffusion~\cref{eqs:2Ddiffpatch} for Lagrangian coupling of various orders~\(\Gamma\).
The errors are relative to the accurate spectral coupling. 
As in the 1D case (\cref{fig:homoEigen1}), \cref{fig:2Derr} shows the expected exponential decrease in macroscale errors as the coupling order~$\Gamma$ increases.

\subsection{An ensemble of phase-shifts appears accurate}
\label{sec:ens2D}

As in \cref{sec:ens} for~1D, simulating over an ensemble of phase-shifts provides flexibility as then there are no constraints on the size of the patches.
Further, \cref{corlatvec} proves that the 1D~patch scheme applied to the ensemble is consistent to arbitrarily high-order with the homogenisation of the heterogeneous diffusion.
Here we discuss how to extend the 1D~ensemble to the 2D~case.

In 2D, the ensemble of all phase shifts of the diffusivities, such as~\eqref{eq:chetr}, is constructed from $p_x$~phase-shifts in the $x$~directions combined with $p_y$~phase-shifts in the $y$~direction, to give a total of $p:=p_xp_y$ members in the ensemble.
For computation we let \(\uv^{I,J}_{i,j}(t)=(u^{I,J}_{i,j,0}(t),\ldots,u^{I,J}_{i,j,p-1}(t))\in\RR^p\) be the vector over the ensemble of field values at position~\((x^I_i,y^J_j)\).

The ensemble microscale system within each patch is the \ode{}s~\cref{eq:2Ddiffpatch} for every member of the ensemble.
For the ensemble as a whole~\cref{eq:2Ddiffpatch} becomes the system
\begin{subequations}\label{eq:cens2D}%
\begin{align}
\partial_t \uv^{I,J}_{i,j}&=\big[\cK^{I,J}_{i+\frac12,j}(\uv^{I,J}_{i+1,j}-\uv^{I,J}_{i,j})
+\cK^{I,J}_{i-\frac12,j}(\uv^{I,J}_{i-1,j}-\uv^{I,J}_{i,j})
\big]/d_x^2
\nonumber\\&\quad{}
+\big[\cK^{I,J}_{i,j+\frac12}(\uv^{I,J}_{i,j+1}-\uv^{I,J}_{i,j})
+\cK^{I,J}_{i,j-\frac12}(\uv^{I,J}_{i,j-1}-\uv^{I,J}_{i,j})
\big]/d_y^2\,,
\label{eq:2DdiffpatchE}
\end{align}
where diffusivity matrices \(\cK_{i,j}:=\diag\big(\kappa^{I,J}_{i,j},\text{ in ensemble order}\big)\).

Then patches of the ensemble~\eqref{eq:2DdiffpatchE} are coupled by a variant of~\eqref{eq:2Dcouplex,eq:2Dcoupley}.
Let's introduce permutation matrices that encode the analogue of the `tangle' of inter-patch communication illustrated by \cref{fig:1Dcoupling43}.
Choose the order of the ensemble in each patch so that the diffusivities~\(\kappa^{I,J}_{i,j}\) are independent of~\(I,J\), which can always be done as all phase-shifts are in the ensemble.
Denote the ensemble vector of diffusivities, in order, on the left patch-edge to be \(\kappav_l:=(\kappa_{1/2,j})\in\RR^p\),  denote those on the right patch-edge by \(\kappav_r:=(\kappa_{n_x+1/2,j})\in\RR^p\), and then set the permutation matrix~\(\cP_x\) such that for all~\(\kappa_{i,j}\) the identity \(\kappav_l=\cP_x\kappav_r\) holds.  
This permutation matrix then connects the right-edge values to the left-edge of the appropriate member of the ensemble.  
Similarly, set the permutation matrix~\(\cP_y\) to connect the top-edge values to the bottom-edge of the appropriate member of the ensemble.
Then the inter-patch coupling conditions for the ensemble are
\begin{align}&
\uv^{I,J}_{0,j}=\cP_x\sum_K\cI^{IK}_{1n_x}\uv^{K,J}_{n_x,j}\,, \quad \uv^{I,J}_{n_y+1,j}=\cP_x^\dag\sum_K\cI^{IK}_{n_x1}\uv^{K,J}_{1,j}\,,\quad  j=1:n_y\,;
\label{eq:cens2Dx}\\&
\uv^{I,J}_{i,0}=\cP_y\sum_K\cJ^{JK}_{1n_y}\uv^{I,K}_{i,n_y}\,, \quad \uv^{I,J}_{i,n_y+1}=\cP_y^\dag\sum_K\cJ^{JK}_{n_y1}\uv^{I,K}_{i,1}\,,\quad i=1:n_x\,,
\label{eq:cens2Dy}
\end{align}
\end{subequations}
where coefficients~\(\cI^{IJ}_{ij}\) and~\(\cJ^{IJ}_{ij}\) implement, in the \(x\) and~\(y\) directions respectively, either spectral interpolation (\cref{sec:sasc}) or Lagrangian interpolation (\cref{sec:fwsc}).
Naturally generalising \cref{cor2Dsa} to vector systems then asserts that this patch-ensemble system preserves the self-adjoint symmetry of the microscale heterogeneous diffusion.

Extensions to more space dimensions appear to be straightforward.

\begin{figure}
\caption{\label{fig:2Derrens} Relative errors of macroscale eigenvalues from Lagrangian coupling for the 2D heterogeneous diffusion~\cref{eq:2Ddiffpatch}, with periodicity $p_x=5$\,, $p_y=4$\,.
We implement the ensemble of all twenty phase-shifts of the diffusivities~\cref{eq:chetr}. 
Patch parameters are $N_x=10$, $N_y=11$\,, width ratios $r_x,r_y=0.1$\,, and with only $n_x=n_y=1$\,. 
This figure and \cref{fig:2Derr} have the same microscale diffusion, the same~$d_x$ and~$d_y$.
The right-hand column lists the accurate eigenvalues obtained \text{from spectral coupling.}
}
\centering
\includegraphics{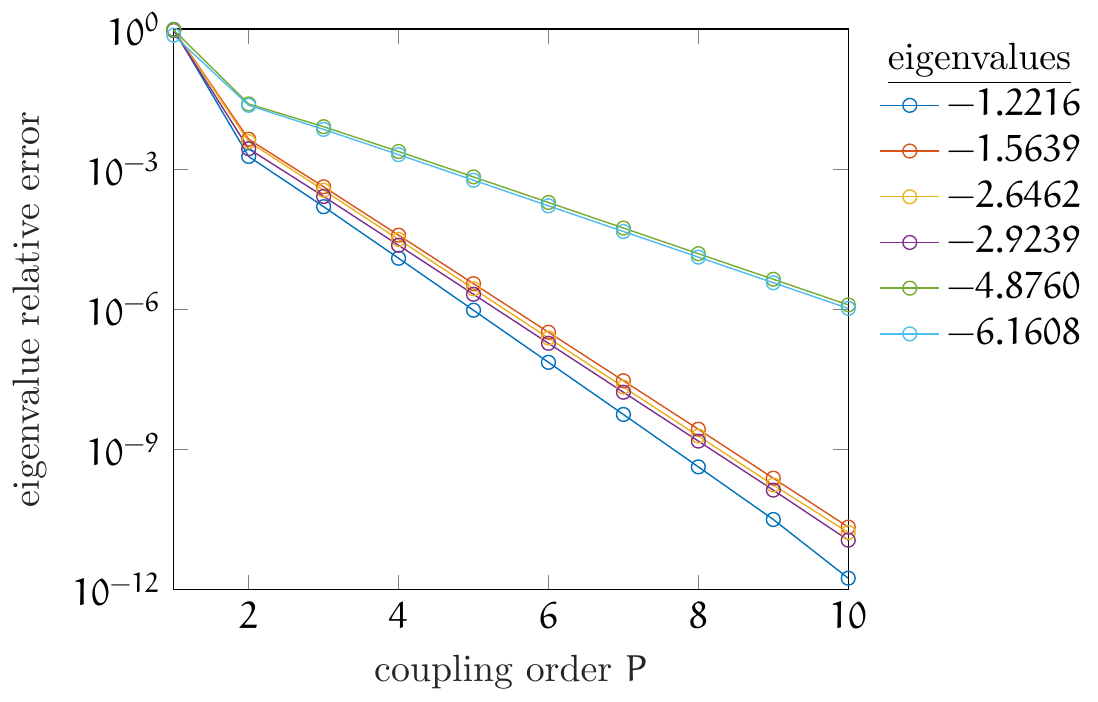}
\end{figure}

\paragraph{Verify accuracy in an example}
Consider the example of this patch-ensemble scheme~\eqref{eq:cens2D}, with Lagrangian coupling of order~\(\Gamma\), applied to the heterogeneous diffusion with diffusivities~\cref{eq:chetr}.
The ensemble has $p=p_xp_y=20$ members from all phase-shifts of these diffusivities.
Upon numerically computing the Jacobian of this scheme, 
\cref{fig:2Derrens} plots relative errors of its macroscale eigenvalues, relative to those for spectral coupling. 
As before, we observe the expected exponential decrease in errors with increasing order of coupling~\(\Gamma\).
In order to compare with~\cref{fig:2Derr} with its microscale spacings $d_x\approx 0.063$ and $d_y\approx 0.057$\,, we here decreased the width ratios, and used the smallest possible patch sizes $n_x,n_y=1$\,.
\cref{fig:2Derr,fig:2Derrens} are remarkably similar, although the ensemble here provides a smoother plot and \text{somewhat smaller errors.}

We explored many other parameter choices and observed that the ensemble always provides error plots similar to \cref{fig:2Derrens}, but that the single phase error plots were more variable, with errors in the Lagrangian coupling sometimes adversely affected by a single large diffusion coefficient. 

\cref{fig:2Dsinens} plots a simulation of the ensemble-mean \(\bar u(x^I_i,y^J_j,t):=\frac1p\sum_{\ell=0}^{p-1}u^{I,J}_{i,j,\ell}(t)\) of the patch-ensemble scheme~\eqref{eq:cens2D} for diffusivities~\eqref{eq:chetr}---the 2D heterogeneous diffusion~\cref{eq:full2Ddiff} on patches of the twenty member ensemble of all phase-shifts of the diffusivities.
It used spectral coupling via the  \textsc{Matlab}/Octave Toolbox \cite[]{Maclean2020a}.
For better visualisation we use only $5\times 5$ patches, with other parameters the same as~\cref{fig:2Derr}; that is,  $n_x=5$\,, $n_y=4$ and $r_x=0.5$\,, $r_y=0.4$\,.
The initial condition is sinusoidal in $x$~and~$y$, plus microscale noise. 
The microscale noise dissipates rapidly so that by time~$0.05$ the simulation's ensemble average is smooth (\cref{fig:2Dsinens}).
In contrast, at time~$0.05$ the single phase simulation (\cref{fig:2Dsin}) is `rough' on the microscale, but this roughness is due to the heterogeneous diffusion rather than the initial disorder.

\begin{figure}
\caption{\label{fig:2Dsinens} Simulation of 2D heterogeneous diffusion~\cref{eqs:2Ddiffpatch} with an ensemble of diffusion configurations constructed from~\cref{eq:chetr} and spectral coupling.
}
\pgfplotsset{label shift=-2ex}
\begin{tabular}{@{\hspace{-2mm}}c@{}c@{}}
\includegraphics{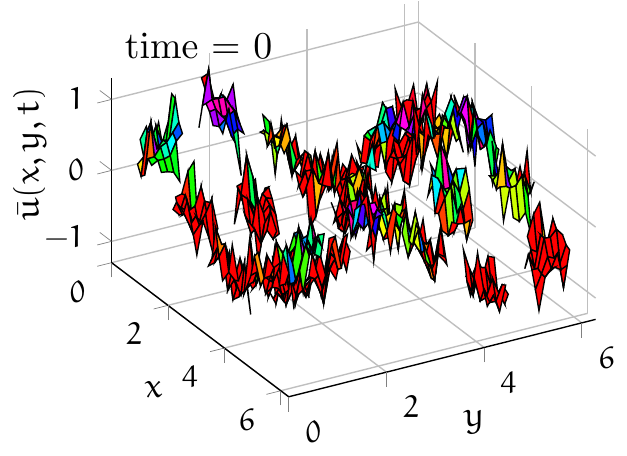}&
\includegraphics{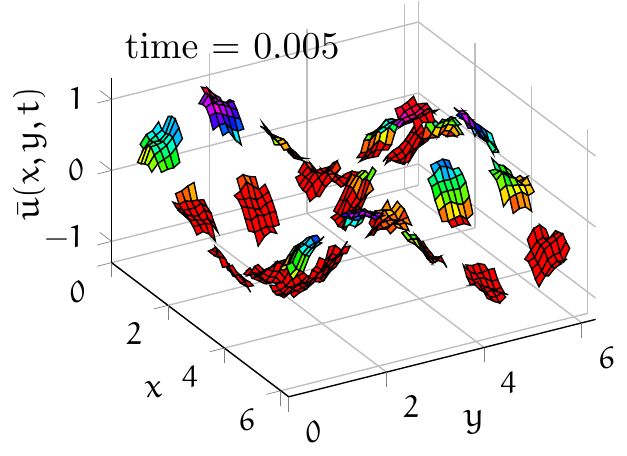}\\
\includegraphics{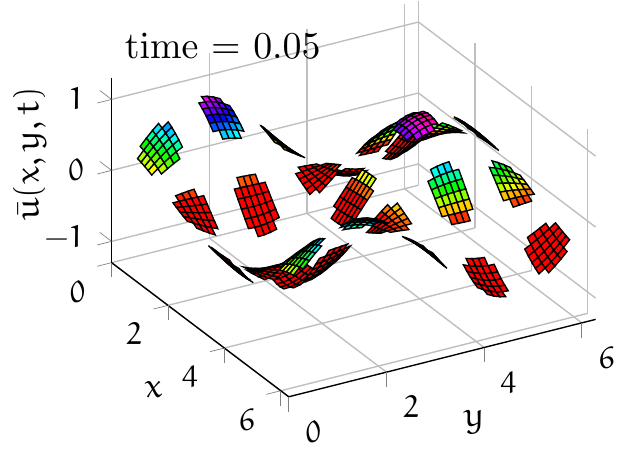}&
\includegraphics{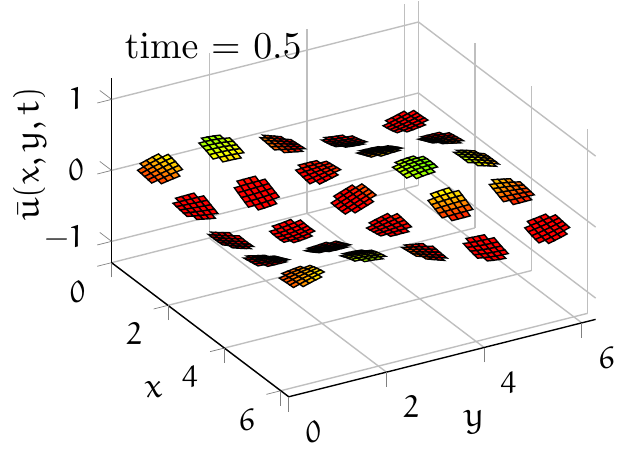}
\end{tabular}
\end{figure}

\subsection{This 2D patch scheme is consistent to high-order}
\label{secpsc2D}
In the previous  \cref{sec:ens2D} our 2D patch dynamics scheme is defined in terms of an ensemble~\cref{eq:cens2D} with heterogeneity, defined by~$\cK$, varying with the spatial parameters.
But for theoretical purposes the ensemble is better formed as a homogeneous system, as in \cref{sec:theory} for~1D where the system~\cref{eq:mbd1Ddiff} is homogeneous in the sense that the diffusion coefficient is independent of the spatial index.

\renewcommand{\vi}[1][i,j]{\vv^{I,J}_{#1}} %

We now form a 2D homogeneous system ensemble analogous to that of the 1D system of \cref{figembedshifts}.
At each point~\((x_i,y_j)\) on the microscale lattice in space, there are \(p=p_xp_y\)~variables forming the ensemble: form them into vector~\(\vv_{i,j}(t)\in\RR^p\). 
Let the components of this vector be denoted~\(v_{i,j,k,\ell}(t)\) for \(k=0:n_x-1\)\,, \(\ell=0:n_y-1\)\,, and in a suitable order for the ensemble.
Rewrite the ensemble equations in the form
\begin{align}
d^2\partial_t v_{i,j,k,\ell }
&=\kappa_{k+\frac12,\ell }(v_{i+1,j,k+1,\ell }-v_{i,j,k,\ell })
+\kappa_{k-\frac12,\ell }(v_{i-1,j,k-1,\ell }-v_{i,j,k,\ell })
\nonumber\\&\quad{}
+\kappa_{k,\ell +\frac12}(v_{i,j+1,i,\ell +1}-v_{i,j,k,\ell })
+\kappa_{k,\ell -\frac12}(v_{i,j-1,i,\ell -1}-v_{i,j,k,\ell })
\label{eq:mbd2Ddiff}
\end{align}
because then the heterogeneity only varies in~\(k,\ell \) (like index~$\ell$ in \cref{figembedshifts} for the 1D case).
Then \(u_{i,j}(t):=v_{i,j,i+\phi,j+\psi}\) satisfies the original heterogeneous diffusion~\eqref{eq:full2Ddiff}, but with the diffusivities phase-shifted by~\(\phi,\psi\), respectively.
That is, the system~\eqref{eq:mbd2Ddiff} captures all the \(p=p_xp_y\) phase-shifted versions of the heterogeneous diffusion~\eqref{eq:full2Ddiff}, but is homogeneous in the spatial indices~$i,j$ and analogous to the homogeneous 1D ensemble system~\cref{eq:mbd1Ddiff}.

Now form the vectors~\(\vv_{i,j}\) from a suitable ordering of~\(v_{i,j,k,\ell}\) so that we can write~\eqref{eq:mbd2Ddiff} in the matrix-vector form
\begin{equation}
\partial_t\vv_{i,j}=
\delta_i\big(\Kp_xE_i^{1/2}-\Km_xE_i^{-1/2}\big)\vv_{i,j}
+\delta_j\big(\Kp_yE_j^{1/2}-\Km_yE_j^{-1/2}\big)\vv_{i,j}
+D\vv_{i,j}\,,
\label{eq:full2DdiffE}
\end{equation}
in terms of microscale shifts~\(E_i\) and~\(E_j\), microscale centred differences~\(\delta_i\) and~\(\delta_j\), some  \(p\times p\) cross-ensemble diffusivity matrix~\(D\), and four \(p\times p\) off-diagonal diffusivity matrices~\(\Kp_x\C \Kp_y\C \Km_x\) and~\(\Km_y\) that are \emph{independent of location}~\(i,j\).
The homogeneous matrix-vector system~\eqref{eq:full2DdiffE} is the 2D analogue of the 1D system~\eqref{eqlatvec}, and is just \(p\)~phase-shifted copies of the original heterogeneous diffusion~\eqref{eq:full2Ddiff}.

To proceed to analyse the patch scheme for 2D heterogeneous diffusion, we apply the patch scheme to the homogeneous~\eqref{eq:full2DdiffE}. 
So, let \(\vi\in\RR^p\)~denote the vector of field values at micro-grid point~\((i,j)\) in patch~\((I,J)\).
In the \(x,y\)-directions, respectively, let there be \(N_x,N_y\) patches, with spacing~\(H_x,H_y\), and of width~\(h_x,h_y\).
Generalising~\eqref{eqlatc} to~2D, apply~\eqref{eq:full2DdiffE} inside the patches:
\begin{subequations}\label{eqlatc2}%
\begin{equation}
\partial_t\vi=
\delta_i\big(\Kp_xE_i^{1/2}-\Km_xE_i^{-1/2}\big)\vi[i,j]
+\delta_j\big(\Kp_yE_j^{1/2}-\Km_yE_j^{-1/2}\big)\vi[i,j]
+D\vi\,,
\label{eqlatvec2}
\end{equation}
with the inter-patch coupling that the edge-values, for every~\(i,j,I,J\), 
\begin{align}\text{(\cref{fig:2Dcoupling} right)\hspace{1em}}&
\vi[n+1,j]=E_x^{r_x}\vi[1,j]\,, \quad
\vi[0,j]=E_x^{-r_x}\vi[n,j]\,, 
\label{eqlatc2x}
\\\text{(\cref{fig:2Dcoupling} left)\hspace{1.6em}}&
\vi[i,n+1]=E_y^{r_y}\vi[i,1]\,, \quad
\vi[i,0]=E_y^{-r_y}\vi[i,n]\,.
\label{eqlatc2y}
\end{align}
\end{subequations}
As in \cref{sec:theory}, the operators~\(E^{\pm r_x}_x\) and~\(E^{\pm r_y}_y\) are shifts across the patch widths, and may be realised and approximated by spectral or Lagrangian interpolation.
Then the following generalises \cref{thm:psc} to~2D.

\begin{theorem} \label{thm:psc2}
The macroscale of the patch scheme~\eqref{eqlatc2} 
is consistent with the microscale dynamics of~\eqref{eq:full2DdiffE} over the entire spatial domain.
\end{theorem}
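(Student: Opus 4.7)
The plan is to mimic the proof of \cref{thm:psc}, applying its 1D manipulation separately in the $x$- and $y$-directions and then superposing the results. This works because in the homogeneous ensemble form~\eqref{eqlatvec2} the microscale shifts~$E_i,E_j$ act on disjoint lattice indices, and together with the spatially constant matrices $\Kp_x,\Km_x,\Kp_y,\Km_y,D$ they all mutually commute---a commutativity that was unavailable before the ensemble reformulation and is precisely what it buys.

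Write $(\partial_t-D)\vi=\delta_iL_x\vi+\delta_jL_y\vi$ with abbreviations $L_x:=\Kp_xE_i^{1/2}-\Km_xE_i^{-1/2}$ and $L_y:=\Kp_yE_j^{1/2}-\Km_yE_j^{-1/2}$. Using the identities of \cref{tblopids}, on smooth interpolants express $\delta_i=f(\bar\delta_x)$ with $f(z):=2\sinh\bigl[\tfrac1{n_x}\asinh(z/2)\bigr]$ and macroscale one-patch-width centred difference $\bar\delta_x:=E_i^{n_x/2}-E_i^{-n_x/2}$, and similarly in~$y$ with $n_y$ and $\bar\delta_y$. Define the mid-patch macroscale value $\Vv^{I,J}:=\vi[(n_x+1)/2,(n_y+1)/2]$.

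For each direction the chain of algebraic steps is exactly as in \cref{thm:psc}. Working in the $x$-direction, evaluate $\bar\delta_xL_x\vi$ at the mid-patch to produce $\Kp_x\vi[n_x+1,(n_y+1)/2]-\Km_x\vi[n_x,(n_y+1)/2]-\Kp_x\vi[1,(n_y+1)/2]+\Km_x\vi[0,(n_y+1)/2]$; apply the $x$-coupling~\eqref{eqlatc2x} to replace the $x$-edge values by $E_x^{\pm r_x}$-shifts of next-to-edge values; factorise via commutativity as $(E_x^{r_x/2}-E_x^{-r_x/2})(\Kp_xE_x^{r_x/2}\vi[1,(n_y+1)/2]-\Km_xE_x^{-r_x/2}\vi[n_x,(n_y+1)/2])$; and identify $E_x^{\pm r_x/2}=E_i^{\pm n_x/2}$ on the smooth macroscale interpolant to conclude that $\bar\delta_xL_x\vi$ evaluated at mid-patch equals $\bar\delta_xL_x\Vv^{I,J}$. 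Running the identical argument in the $y$-direction via~\eqref{eqlatc2y} gives the analogous identity for the $y$-term. Summing and re-applying $f$ in each direction (equivalently, undoing the formal inverses) recovers $\partial_t\Vv^{I,J}=\delta_iL_x\Vv^{I,J}+\delta_jL_y\Vv^{I,J}+D\Vv^{I,J}$, which is the microscale form~\eqref{eq:full2DdiffE}, as required.

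The principal obstacle is that the 1D proof hinges on formally inverting the single-variable function $f$ applied to the \emph{entire} equation, and this does not extend directly when the right-hand side is a sum $f(\bar\delta_x)L_x\vi+f(\bar\delta_y)L_y\vi$ on which $f^{-1}$ does not act cleanly. Mutual commutativity of the $x$- and $y$-shifts with each other and with the constant matrices $\Kp_*,\Km_*,D$ is what rescues the argument: each summand may be processed independently by its own instance of the 1D identity, and the two macroscale contributions superpose to build the full 2D macroscale equation. Careful bookkeeping is still required to track which shifts act on the microscale lattice within a patch versus the macroscale interpolant between patches, in both directions simultaneously, but no new analytic ingredient arises beyond those established for \cref{thm:psc}.
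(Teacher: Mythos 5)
Your overall strategy---reduce to the one-dimensional argument direction by direction---is the right one, and your two directional identities, that $\bar\delta_xL_x\vv^{I,J}_{i,j}$ evaluated at mid-patch equals $\bar\delta_xL_x\Vv^{I,J}$ and likewise in~$y$, are correctly derived. The gap is in the final assembly, the step you describe as ``summing and re-applying $f$ in each direction''. The interior equation gives $(\partial_t-D)\Vv^{I,J}$ equal to the mid-patch evaluation of $f(\bar\delta_x)L_x\vv^{I,J}_{i,j}+f(\bar\delta_y)L_y\vv^{I,J}_{i,j}$, so what you actually need is that $f(\bar\delta_x)L_x\vv^{I,J}_{i,j}$ at mid-patch equals $f(\bar\delta_x)L_x\Vv^{I,J}$, and this does not follow from your first-power identity: $f$~is nonlinear, so $f(\bar\delta_x)$ contains all powers of~$\bar\delta_x$, and already $\bar\delta_x^2L_x\vv^{I,J}_{i,j}$ evaluated at mid-patch calls for field values near $i\approx 3n_x/2$, outside the patch, so the higher-power identities are not even meaningful for the sub-patch field. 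Equivalently, the 1D proof's inversion of~$f$ must be applied to an equation whose left-hand side is free of spatial shifts in the relevant index, and in 2D the term $\delta_jL_y\vv^{I,J}_{i,j}$ remaining on the right-hand side prevents you from inverting $f$ against the $x$-term alone. Mutual commutativity of the shifts with the constant matrices does not make $f^{-1}$ additive over the two summands, so it cannot rescue the superposition.

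The paper closes this gap by a nested rather than parallel application of \cref{thm:psc}. That theorem tolerates an arbitrary constant matrix~$D$, which is carried passively through the whole inversion--coupling--reversion manipulation; so one first absorbs the entire $y$-direction operator, \emph{including its inter-patch coupling}~\eqref{eqlatc2y}, into an enlarged matrix~$D'$ acting on vectors indexed over $j$ and~$J$ (a constant, $i$-shift-free matrix precisely because of the homogeneity of the ensemble form). The 1D theorem then applies verbatim in~$x$ with a single $\delta_i$ term on the right, yielding consistency at the $x$-mid-patch values; a second application, with the now-consistent $x$-operator absorbed into a matrix~$D''$, gives consistency in~$y$ and hence the 2D result. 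You should restructure your argument this way: the key observation is not commutativity-enabled superposition but that the other direction's dynamics and coupling fit into the ``$D$'' slot that \cref{thm:psc} already accommodates.
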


As in \cref{sec:con}, the subtlety here is that~\eqref{eqlatc2} holds only inside (small) patches of space, whereas~\eqref{eq:full2DdiffE} holds throughout space, thus forming two very different dynamical systems.
The nontrivial challenge is to connect these two systems.

\begin{proof} 
In essence, \cref{thm:psc2} follows because the proof of \cref{thm:psc} applies independently to both the \(x\) and~\(y\) directions.

\begin{itemize}
\item \newcommand{\wi}[1][i]{\wv^{I}_{#1}}
In the first of two steps, consider the \(x\)-direction.
In this step, let the vector \(\wi\) be the vector of values~\((\vi)\in\RR^{pn_yN_y}\) formed over \(j=1:n_y\) and \(J=1:N_y\)\,.  
Then the system~\cref{eqlatc2} takes the form
\begin{equation*}
\partial_t\wi=\delta_i\big(\Kp'\wi[i+1/2]-\Km'\wi[i-1/2]\big)+D'\wi\,,
\end{equation*}
with \(D'\wi\) representing the collective over~\(j,J\) of the \(y\)-direction operator \(\delta_j\big(\Kp_y\vi[i,j+1/2]-\Km_y\vi[i,j-1/2]\big)
+D\vi\) coupled by~\cref{eqlatc2y}, and \(\Kp'\wi[i+1/2]-\Km'\wi[i-1/2]\) representing the collective of \(\Kp_x\vi[i+1/2,j]-\Km_x\vi[i-1/2,j]\).
Further, from~\cref{eqlatc2x}, the \(\wi\)-`patches' are coupled by
\begin{equation*}
\wi[n+1]=E_x^{r_x}\wi[1] \quad\text{and}\quad 
\wi[0]=E_x^{-r_x}\wi[n]\,.
\end{equation*}
This \(\wi\)-system of `patches' is of the form for \cref{thm:psc}, and hence the \(x\)-direction dynamics are consistent.
Specifically, upon defining the mid-`patch' value \(\Wv^I(t):=\wi[\bar n_x]\) for \(\bar n_x:=n_x/2+1/2\)\,, from the last equation in the proof of \cref{thm:psc} we have
\begin{equation*}
\partial_t\Wv^I=\delta_i
\big[(\Kp' E_i^{1/2}-\Km' E_i^{-1/2})\Wv^I\big]+D'\Wv^I.
\end{equation*}
That is, upon unpacking the \(\Wv^I\) variables,
\begin{align}
\partial_t\vi[\bar n_x,j]&=\delta_i
\big[(\Kp_x E_i^{1/2}-\Km_x E_i^{-1/2})\vi[\bar n_x,j]\big]
\nonumber\\&\quad{}
+\delta_j\big(\Kp_y\vi[\bar n_x,j+1/2]-\Km_y\vi[\bar n_x,j-1/2]\big)
+D\vi[\bar n_x,j].
\label{eqlatvec2x}
\end{align}

\item \newcommand{\wj}[1][j]{\wv^{J}_{#1}}
The second of the two steps is to consider the \(y\)-direction.
In this step, let the vector~\(\wj\) be the vector of values~\((\vi[\bar n_x,j])\in\RR^{pN_x}\) formed over \(I=1:N_x\)\,.  
Then the system~\cref{eqlatvec2x} takes the form
\begin{equation*}
\partial_t\wj=\delta_j\big(\Kp''\wj[j+1/2]-\Km''\wj[j-1/2]\big)+D''\wj\,,
\end{equation*}
with \(D''\wj\) representing the collective over~\(I\) of the \(x\)-direction operator \(\big[(\Kp_x E_i^{1/2}-\Km_x E_i^{-1/2})\vi[\bar n_x,j]\big]
+D\vi[\bar n_x,j]\) coupled by~\cref{eqlatc2x}, and \(\Kp''\wj[j+1/2]-\Km''\wj[j-1/2]\) representing the collective of \(\Kp_y\vi[\bar n_x,j+1/2]-\Km_y\vi[\bar n_x,j-1/2]\).
Further, from~\cref{eqlatc2y}, the \(\wj\)-`patches' are coupled by
\begin{equation*}
\wj[n+1]=E_y^{r_y}\wj[1] \quad\text{and}\quad 
\wj[0]=E_y^{-r_y}\wj[n]\,.
\end{equation*}
This \(\wj\)-system of `patches' is of the form for \cref{thm:psc}, and hence the \(y\)-direction dynamics are consistent.
Specifically, upon defining the mid-`patch' value \(\Wv^J(t):=\wj[\bar n_y]=\vi[\bar n_x,\bar n_y]\) for \(\bar n_y:=n_y/2+1/2\)\,, from the last equation in the proof of \cref{thm:psc} we have
\begin{equation*}
\partial_t\Wv^J=\delta_j
\big[(\Kp'' E_j^{1/2}-\Km'' E_j^{-1/2})\Wv^J\big]+D''\Wv^J.
\end{equation*}
That is, upon unpacking the \(\Wv^J\) variables, and defining the mid-patch value \(\Vv^{I,J}:=\vi[\bar n_x,\bar n_y]\),
\begin{align}
\partial_t\Vv^{I,J}&=\delta_j
\big[(\Kp_y E_j^{1/2}-\Km_y E_j^{-1/2})\Vv^{I,J}\big]
\nonumber\\&\quad{}
+\delta_i\big[(\Kp_x E_i^{1/2}-\Km_x E_i^{-1/2})\Vv^{I,J}\big]
+D\Vv^{I,J}.
\label{eqlatvec2y}
\end{align}

\end{itemize}
The operator on the right-hand side of~\eqref{eqlatvec2y} is precisely the same as that for the microscale~\eqref{eq:full2DdiffE}. 
Thus, in the patch scheme~\eqref{eqlatc2}, the evolution over the macroscale of the mid-patch values~\(\Vv^{I,J}\) are consistent with the entire domain evolution of~\eqref{eq:full2DdiffE}.
\end{proof}

The following \cref{corslat2} extends to 2D the properties of \cref{corlatvec,corlatmul}.
\begin{corollary}\label{corslat2}
Consider the heterogeneous diffusion~\cref{eq:full2Ddiff} with diffusivities \(p_x,p_y\)-periodic in the \(x,y\)-directions respectively.
\begin{enumerate}[label={\ref{corslat2}(\alph*)}
    ,ref={Corollary~\thetheorem(\alph*)}]

\item\label{corslat2b} The patch scheme applied to the ensemble~\cref{eq:full2DdiffE} of the heterogeneous diffusion~\cref{eq:full2Ddiff}, is consistent with the homogenisation of the heterogeneous diffusion. 
(This consistency is illustrated by \cref{fig:2Derrens}.)

\item\label{corslat2c} If the chosen sizes of the patches,~\(n_x,n_y\), are a multiple of the microscale periodicities,~\(p_x,p_y\), respectively, then the patch scheme applied to~\cref{eq:full2Ddiff} with only one realisation of the heterogeneity (i.e., without the ensemble) has macroscale dynamics consistent with the homogenisation of the heterogeneous diffusion. 
(This consistency is illustrated by \cref{fig:2Derr}.)
\end{enumerate}
\end{corollary}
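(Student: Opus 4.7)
The plan is to prove the two parts by mirroring the 1D arguments of \cref{corlatvec,corlatmul}, but now built upon the 2D consistency result \cref{thm:psc2} and a 2D analogue of the homogenisation analysis of \cref{sssech}.

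For part~\ref{corslat2b}, the first step is to establish that the homogeneous ensemble system~\eqref{eq:full2DdiffE} admits an emergent slow manifold whose leading-order evolution is a 2D homogenised \pde\ of the form \(\partial_tV=\cK_{2,x}\partial_x^2V+\cK_{2,y}\partial_y^2V+\cdots\), plus higher-order corrections in the microscale spacings~\(d_x,d_y\). I would carry out this step by applying a 2D spatial Fourier transform \((i,j)\mapsto(k_x,k_y)\) to~\eqref{eq:full2DdiffE}, observing that at \(k_x=k_y=0\) the resulting \(p\times p\) matrix on the ensemble cross-section \(D+(\Kp_x-\Km_x)+(\Kp_y-\Km_y)\) still has a single zero eigenvalue (with the uniform eigenvector) and the remainder bounded away from zero, then invoking centre manifold theory globally in~\(\tilde V\) and locally in \((k_x,k_y)\). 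Parameterising the slow manifold by the ensemble mean \(\tV(k_x,k_y,t):=p^{-1}\sum\tv_{k,\ell}\) and iterating as in \cref{appmh1dd} then yields the homogenised \pde. The second step then combines this with \cref{thm:psc2}: the theorem gives consistency of the patch scheme with~\eqref{eq:full2DdiffE} on the entire domain, and the slow-manifold reduction gives consistency of~\eqref{eq:full2DdiffE} with the homogenisation. Since both notions of consistency hold to arbitrary order in~\(H_x,H_y\), transitivity yields~\ref{corslat2b}.

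For part~\ref{corslat2c}, the plan is to generalise the decoupling observation of \cref{corlatmul}. Under the ensemble description, each variable \(u^{I,J}_{i,j,\phi,\psi}\) is identified by a phase pair \((\phi,\psi)\in\{0,\dots,p_x-1\}\times\{0,\dots,p_y-1\}\). The \(x\)-coupling through~\(\cP_x\) in~\eqref{eq:cens2Dx} shifts the phase \(\phi\) by \(n_x\mod p_x\), and the \(y\)-coupling through~\(\cP_y\) in~\eqref{eq:cens2Dy} shifts \(\psi\) by \(n_y\mod p_y\). When \(p_x\mid n_x\) and \(p_y\mid n_y\), both shifts are trivial, so the coupling preserves every phase pair; the intra-patch dynamics~\eqref{eq:2DdiffpatchE} already act diagonally in \((\phi,\psi)\). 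Hence the \(p_xp_y\) ensemble members decouple into \(p_xp_y\) identical (up to phase) independent patch systems, each of which is precisely the patch scheme applied to the single-phase heterogeneous diffusion~\eqref{eq:full2Ddiff}. Applying part~\ref{corslat2b} to any one of these components then transfers the high-order consistency to the single-realisation scheme, giving~\ref{corslat2c}.

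The principal obstacle is the first step of part~\ref{corslat2b}: verifying that the global homogenisation of the ensemble system~\eqref{eq:full2DdiffE} really exists and is well-behaved under the 2D Fourier reduction. The spectral gap bound \(\beta_{2D}>0\) at \((k_x,k_y)=(0,0)\) requires control of the cross-ensemble diffusion matrix~\(D\) together with the two permutation-like shifts, which is more intricate in 2D than the scalar bound in \cref{sssech}. Once that gap is established, the construction of corrections and the passage back to physical space via a low-pass filter follow the same iteration pattern as in the 1D case, and the remaining identifications in the decoupling argument are combinatorial rather than analytic.
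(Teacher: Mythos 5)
Your proposal follows essentially the same route as the paper: part~(b) rests on \cref{thm:psc2} plus the fact that the ensemble~\eqref{eq:full2DdiffE} is \(p\) decoupled phase-shifted copies of~\eqref{eq:full2Ddiff} (your explicit 2D Fourier/centre-manifold homogenisation step is a fleshed-out version of what the paper leaves implicit by analogy with \cref{sssech}), and part~(c) is exactly the paper's decoupling argument that trivial phase shifts under \(p_x\mid n_x\), \(p_y\mid n_y\) reduce the ensemble to \(p_xp_y\) independent single-realisation schemes. One minor slip: at \(k_x=k_y=0\) the difference operators \(\delta_i,\delta_j\) annihilate constants, so the cross-section matrix is just~\(D\) rather than \(D+(\Kp_x-\Km_x)+(\Kp_y-\Km_y)\); this does not affect your spectral-gap conclusion.
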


{\def\proofname{Proof of \cref{corslat2b}.\quad}
\begin{proof} 
Since~\eqref{eq:full2DdiffE} is formed of \(p\)~decoupled copies of~\eqref{eq:full2Ddiff}, \cref{thm:psc2} implies the patch scheme~\eqref{eqlatc2} of~\eqref{eq:full2DdiffE} is thus also consistent with the original heterogeneous diffusion~\eqref{eq:full2Ddiff}.
\end{proof}}

{\def\proofname{Proof of \cref{corslat2c}.\quad}
\begin{proof} 
In such scenarios each member of the ensemble~\cref{eq:full2DdiffE} decouples from each other.
Thus the patch scheme with only one member of the ensemble, one realisation of the phase-shift in the diffusivities, has the same macroscale dynamics as the patch scheme applied to the ensemble, and hence, by \cref{corslat2b} is consistent with the heterogeneous diffusion~\eqref{eq:full2Ddiff}.
\end{proof}}

\section{Conclusion}

This article solves a longstanding issue in equation-free macroscale modelling by providing accurate and efficient patch coupling conditions which preserve self-adjoint symmetry.
By preserving symmetries we ensure that the macroscale model maintains the same conservation laws as the original microscale model. 
As the self-adjoint symmetry is controlled by the spatial coupling, here we have focused only on the spatial (or gap-tooth~\citep{Gear03}) implementation of patch dynamics, but full implementation with both patch coupling in space and projective integration in time is straight forward with our \textsc{Matlab}/Octave Toolbox \cite[]{Roberts2019b}.

Theoretical support for our self-adjoint macroscale modelling is provided in the context of microscale heterogeneous diffusion and its homogenised dynamics on the macroscale. 
This is a canonical system for multiscale modelling with a rich microscale structure, and provides a basis for macroscale modelling of related but more complex systems. 
For other linear systems of no more than second order in space, accurate macroscale models are expected when implementing  the self-adjoint patch coupling exactly as described here, as shown in the example of a heterogeneous wave (\cref{fig:homoWaveEdgyU2}) and also shown in the \textsc{Matlab}/Octave Toolbox \cite[]{Roberts2019b}.
Further work is required for self-adjoint modelling \text{of nonlinear problems.}

For simulations, this article assumes periodic boundary conditions for the spatial domain. 
Adapting patch dynamics for different spatial boundary conditions is an important future task, and will build upon our current research concerning patch dynamics for shocks~\citep{Maclean2020b}.
The multiscale modelling which accurately captures the sharp features on either side of any shock is expected to be equally effective in capturing boundary layer phenomena. 

In multiscale modelling, an important consideration is how to define suitable macroscale variables which parameterise the slow manifold evolution. 
For the diffusion example considered here, macroscale variables are defined from averages or point samples of microscale variables, but straight forward averaging is often not possible in complex systems.
For example, a microscale description of cell dynamics requires parameters to describe cell locations, velocities and interactions, but the slow macroscale dynamics might be effectively described by only time and cell distribution \citep{Dsilva2018}.
Current research is designing manifold learning algorithms which apply on-the-fly simulations to determine low-dimensional parameterisations on the desired slow manifold, thus efficiently directing the simulation to the manifold of interest \citep{Pozharskiy2020}.
When implemented together, patch dynamics and manifold learning  should provide powerful on-the-fly computation homogenisation.

\paragraph{Acknowledgement}
This research was funded by the Australian Research Council under grants DP150102385 and DP200103097.  
The work of I.G.K. was also partially supported by the DARPA PAI program.

\appendix
\section{Macroscale homogenise of 1D diffusion}
\label{appmh1dd}

This is computer algebra script \verb|homo1Ddiff.txt| for \cref{sssech}.
\lstinputlisting{homo1Ddiff.txt}

\end{document}